\numberwithin{equation}{section}
\newenvironment{proof}{{\noindent \it Proof.} }{\hfill $\square$ \par}
\newtheorem{thm}{Theorem}[section]
\newtheorem{prop}{Proposition}[section]
\newtheorem{example}{Example}[section]
\newtheorem{lem}{Lemma}[section]
\newtheorem{definition}{Definition}[section]
\newtheorem{remark}{Remark}[section]
\begin{document}
\title{On distribution dependent stochastic differential equations driven by $G$-Brownian motion}
	\author{De Sun$^{a}$, Jiang-Lun Wu$^{b}$ and Panyu Wu$^{c,}$\footnote{Corresponding author. 
E-mail: wupanyu@sdu.edu.cn} ~\\
	{\small $^a$ School of Mathematics, Shandong University, Jinan 250100, China}\\
{\small $^b$ Department of Mathematics, Computational Foundry, Swansea University, Swansea SA1 8EN, UK}\\
	{\small $^c$ Zhongtai Securities Institute for Financial Studies, Shandong University, Jinan 250100, China}\\
	{\small (Emails: sunde@mail.sdu.edu.cn; j.l.wu@swansea.ac.uk; wupanyu@sdu.edu.cn)}}	
    \date{}
    \maketitle\noindent{}	
    {\bf Abstract:}  Distribution dependent stochastic differential equations have been a very hot subject with extensive studies. On the other hand, under the $G$-expectation framework, stochastic differential equations driven by $G$-Brownian motion (in short form, $G$-SDEs) have received increasing attentions,
and the existence and uniqueness of solutions to $G$-SDEs under Lipschitz and non-Lipschitz conditions have been obtained. Based on these studies,
it is very natural and also important to investigate the $G$-SDEs which are also distribution dependent. In this paper, we are concerned with the well-posedness of the distribution dependent $G$-SDEs. To this end, we first introduce a proper distance of the involved distribution functions and propose a new formulation of the distribution dependent $G$-SDEs. Then, by utilising fix point argument, we establish existence and uniqueness of the solutions of distributed dependent $G$-SDEs under Lipschitz condition. Finally, we derive certain estimates for the solutions of the distribution dependent $G$-SDEs.		

\medskip

    {\bf MSC (2020):} 60H10; 60H30

\medskip

    {\bf Keywords:} Distribution dependent; Stochastic differential equations; $G$-Brownian motion; Existence and uniqueness

	\section{Introduction}
We start with a brief account of the history of distribution dependent stochastic differential equations. The prototype model equation was first proposed in 1938 by Vlasov (see the reprint \cite{vlasov}) with the named of  mean field interaction in mathematical physics. Furthermore, inspired by Kac’s profound foundations of kinetic theory \cite{kac}, in the seminal work \cite{mckean}, McKean had formulated the relevant SDEs with coefficients involving distributions of the solutions, hereafter names as McKean-Vlasov equations (also called mean-field SDEs in some literature). It was Sznitman who first established the existence and uniqueness of solutions of McKean-Vlasov SDEs with bounded drift coefficients and constant diffusion coefficients \cite{szn}. Furthermore, with the help of Wasserstein distance, M\'{e}l\'{e}ard \cite{mel} proved the existence and uniqueness of solutions of McKean-Vlasov SDEs  under Lipschitz condition by using the fixed point theorem. Moreover, Buckdahn et al \cite{lipeng} studied the relation of mean-field SDEs  to non-linear PDEs. Moreover, in Wang \cite{wang}, by iterating in distributions, strong solutions of McKean-Vlasov SDEs were constructed. In Huang and Yang \cite{huang}, the existence and uniqueness of the distribution-dependent SDEs with the H\"{o}lder continuous drift coefficients driven by a $\alpha$-stable process were obtained.  Furthermore, R\"{o}ckner and Zhang \cite{roc} studied the situation in which the diffusion coefficients are uniformly non degenerate, bounded, H\"{o}lder continuous and the drift coefficients are integrable, they established the existence and uniqueness of strong solutions as well as weak solutions of McKean-Vlasov SDEs.

    On the other hand, motivated by uncertainty problems, risk measures and super-hedging in finance, Peng \cite{peng1}\cite{peng2} invented a framework of time consistent sublinear expectation, called $G$-expectation. Under the $G$-expectation framework, a new notion of $G$-normal distribution was initiated, which plays the same important role in the theory of sublinear expectation as that of normal distribution in the classical probability theory. Based
on the $G$-normal distribution, a new type of $G$-Brownian motion and the related stochastic calculus of It\^{o}'s type have been developed. Furthermore,
stochastic differential equations driven by $G$-Brownian motion ($G$-SDEs) have been considered by Gao \cite{gao} and Peng \cite{peng2}, wherein the solvability of $G$-SDEs under Lipschitz conditions has been obtained by the contraction mapping theorem. Along this line, Lin \cite{lin2} obtained a pathwise uniqueness result for non-Lipschitz $G$-SDEs with bounded  coefficients. Bai and Lin \cite{bai} further studied the existence and uniqueness of solutions to $G$-SDEs with integral-Lipschitz coefficients. More investigation on $G$-SDEs can also be found in \cite{hurenhe}\cite{hujiliu}\cite{hujiang}\cite{lin1}\cite{xugehu} (and references therein), just mention a few.

      Based on the above studies, it is natural to consider the situation that the $G$-SDEs are also distribution dependent. The preliminary question is then the proper formulation of the distribution dependent $G$-SDEs. It was Sun \cite{sun2020} first considered this topic. Sun introduced the following mean-field $G$-SDE

\begin{equation}\label{sunsde}
{X_t} = x_0 + \int_0^t \hat{\mathbb{E}} [b(s,x,{X_s})]{|_{x = {X_s}}}ds + \int_0^t \hat{\mathbb{E}} [{\sigma _j}(s,x,{X_s})]{|_{x = {X_s}}}d{B_s} + \int_0^t \hat{\mathbb{E}} [{h_{ij}}(s,x,{X_s})]{|_{x = {X_s}}}d{\langle {B^i},{B^j}\rangle _s}
\end{equation}
and showed the existence and uniqueness of solutions under Lipschitz condition, see \cite{sun2020} for details, wherein Sun also introduced the mean-field backward SDE driven by $G$-Brownian motion and established the existence and uniqueness theorem for the equation under Lipschitz condition. Further in Sun \cite{sun2021}, the equation with uniformly continuous coefficients was considered. In the present paper, we want to study the equation (\ref{sunsde})
with distribution dependent coefficients. This motives us to establish a new formulation of distribution dependent $G$-SDEs in $G$-expectation space. To this end, we first construct a new distance for the distribution functions, which turns to be equivalent to the 1-Wasserstein distance in probability space (see our remark \ref{1w} below), then we formulate the well-posedness of these equations, and by utilising fix point argument, we establish the existence and uniqueness of the solutions of our distribution dependent $G$-SDEs with Lipschitz coefficients. We ends our paper by deriving certain estimates for the solutions to distribution dependent $G$-SDEs.		

The rest of our paper is organised as follows. Section 2 presents the necessary preliminaries on sublinear expectation spaces and the $G$-framework. In Section 3, we introduce several complete metric spaces of sublinear functionals. Section 4 is devoted to establishing the existence and uniqueness theorem for distribution dependent $G$-SDEs with Lipschitz coefficients, deriving certain estimates for the solutions and providing an example to support our obtained results.
	
	\section{Notations and preliminaries}
	In this section, we will recall some definitions and results in sublinear expectation spaces and the $G$-framework. The readers may refer to Denis et al \cite{denis},  Gao \cite{gao}, Hu et al \cite{hujisong} and Peng \cite{peng1}, \cite{peng2} for more details.
	Let $\Omega $ be a given nonempty set and $\cal {H}$ be a linear space of real functions defined on $\Omega $ such that if
	${X_1}, \cdots ,{X_n} \in \cal {H}$, then $\varphi \left( {{X_1}, \cdots ,{X_n}} \right) \in \cal {H}$ for each $\varphi  \in {C_{Lip}}({{\mathbb{R}^n}})$, where ${C_{Lip}}({{\mathbb{R}^n}})$ denotes the linear space of functions $\varphi$ satisfying the following Lipschitz condition:
	\begin{center}
		$\left| {\varphi (x) - \varphi (y)} \right| \le {C_\varphi }\left| {x - y} \right|$, for $x,y \in {{\mathbb{R}^n}}, $
	\end{center}
	where  ${C_\varphi }>0$ is the minimal Lipschitz constant for the Lipschtian function$\varphi$.\par
	\begin{definition}\label{sub}
	A functional $\mathbb{E}[\cdot]: \mathcal{H} \rightarrow \mathbb{R}$ is called a sublinear expectation on $\mathcal{H}$, denoted by $\mathbb{E}[\cdot]$, if for any $X, Y \in \mathcal{H}$, the following four conditions are fulfilled  \par
		(1) (Monotonicity) if $X \geq Y$, then $\mathbb{E}[X] \geq \mathbb{E}[Y]$;\par
		(2) (Constants preserving) $\mathbb{E}[c]=c$, for $c \in \mathbb{R}$;\par
		(3) (Sub-additivity) $\mathbb{E}[X+Y] \leq \mathbb{E}[X]+\mathbb{E}[Y]$;\par
		(4) (Positive homogeneity) $\mathbb{E}[\lambda X]=\lambda \mathbb{E}[X]$, for $\lambda \in \mathbb{R}^{+}$.\par
	\end{definition}
	The triple $(\Omega, \mathcal{H}, \mathbb{E})$ is called a sublinear expectation space.\par

	\begin{definition}\label{distribution1}
		Let $n\in\mathbb{N}$ and $X=\left(X_{1}, \ldots, X_{n}\right) $ be a given $n$-dimensional random vector on a sublinear expectation space $\left( {\Omega, \cal {H}, \mathbb{E}} \right)$, the sublinear functional ${\mathbb{F}}_{X}$ defined by
\[
{{\mathbb{F}}_{X}}: \varphi \in {C_{Lip}}\left( {{\mathbb{R}^n}} \right) \mapsto {{\mathbb{F}}_{X}}\left( \varphi  \right):= \mathbb{E}\left[ {\varphi \left( X \right)} \right] \in\mathbb{R}
\]
is called the $n$-dimensional distribution of $X$ under $\left( {\Omega, \cal {H}, \mathbb{E}} \right)$.
	\end{definition}
\begin{remark}
The triple $(\mathbb{R}^n,  {C_{Lip}}(\mathbb{R}^n), {{\mathbb{F}}_{X}})$ forms a sublinear expectation space. In other words, ${{\mathbb{F}}_{X}}$ satisfies monotonicity, constants preserving, sub-additivity and positive homogeneity.
\end{remark}
Next, we introduce the notion of distributions of stochastic processes on the sublinear expectation space $\left( {\Omega, \cal {H}, \mathbb{E}} \right)$.
\begin{definition}\label{distribution2}Let $\left(X_{t}\right)_{t \in[0, T]}$ be an $\mathbb{R}^n$-valued stochastic process on the sublinear expectation space
$\left( {\Omega, \cal {H}, \mathbb{E}} \right)$, the functional process $\left({\mathbb{F}}_{t}^{X}\right)_{t \in[0, T]}$ defined by
\[		
{{\mathbb{F}}^{X}_t}: \varphi \in {C_{Lip}}\left( {{\mathbb{R}^n}} \right) \mapsto {{\mathbb{F}}_t^{X}}\left( \varphi  \right) :={{\mathbb{F}}_{X_t}}\left( \varphi  \right) =\mathbb{E}\left[ {\varphi \left( X_t \right)} \right]\in\mathbb{R}
\]
is called the distribution of $\left(X_{t}\right)_{t \in[0, T]}$ on $\left( {\Omega, \cal {H}, \mathbb{E}} \right)$.
	\end{definition}
	
In the rest of the paper, for arbitrarily fixed $d\in\mathbb{N}$, we let $\Omega  = C_0({{\mathbb{R}}^+;\mathbb{R}^d })$ be the space of all ${\mathbb{R}}^d$-valued continuous paths
${({\omega _t})_{t \in {{\mathbb{R}}^ + }}}$
on ${\mathbb{R}}^+$ with ${\omega _0} = {\bf 0}\in\mathbb{R}^d$, equipped with the following distance
	\[\rho\left(\omega^{1}, \omega^{2}\right):=\sum_{i=1}^{\infty} 2^{-i}\left[\left(\max _{t \in[0, i]}\left|\omega_{t}^{1}-\omega_{t}^{2}\right|\right) \wedge 1\right], \text{ for } \omega^{1}, \omega^{2} \in \Omega.\]
	For each fixed $T \in [0,\infty )$,  set ${\Omega _T}: = \left\{ {{\omega _{ \cdot  \wedge T}}:\omega  \in \Omega } \right\}$ and  let ${B_t}(\omega ) = {\omega _t}$ be the canonical process and
	\[Lip\left( {{{\Omega}_T}} \right): = \left\{ {\varphi \left( {{B_{{t_1}}}, \ldots ,{B_{{t_n}}}} \right):{t_1}, \ldots ,{t_n} \in [0,T], \varphi  \in {C_{b.Lip}}\left( {{\mathbb{R}^{d \times n}}} \right),n \ge 1} \right\},\]
where ${C_{b.Lip}}\left( {{\mathbb{R}^{d \times n}}} \right)$ stands for the linear space of bounded functions in ${C_{Lip}}\left( {{\mathbb{R}^{d \times n}}} \right)$.

Peng \cite{peng1} constructed  a consistent sublinear expectation space $\left( {\Omega ,Lip({\Omega _T}),\hat{\mathbb{E}}} \right)$, called the $G$-expectation space and the canonical process $(B_t)_{t\in[0,t]}$ is called a $G$-Brownian motion. The monotonic and sublinear function $G:\ \mathbb{S}(d)\to \mathbb{R}$ is defined by
$$G(A):=\frac{1}{2}\hat{\mathbb{E}}[\langle AB_1,B_1 \rangle],\ A\in \mathbb{S}(d),$$
where $\langle \, , \,\rangle$ is the scalar product on $\mathbb{R}^d$ and $\mathbb{S}(d)$ denotes the collection of $d \times d$ symmetric matrices.

For each given $p \ge 1$, define $\left\|X\right\|_{L_{G}^p}=(\hat{\mathbb{E}}[|X|^p])^{1/p}$  for $X \in Lip({\Omega _T})$, and denote by $L_G^p( {{\Omega _T}})$ the completion of $Lip({\Omega _T})$
under the norm $\left\|\cdot\right\|_{L_{G}^p}$. Then $\hat{\mathbb{E}}$ can be extended continuously to $L_G^p( {{\Omega _T}})$. 		
Next, recall that a partition ${\pi _T^N}$ of $[0,T]$ is a finite, ordered subset ${\pi _T} = \left\{ {{t_0},{t_1}, \cdots {t_N}} \right\}$ such that $0 = {t_0} < {t_1} <  \cdots  < {t_N} = T$. We further set
		\[\mu \left( {{\pi _T^N
		}} \right) := \max \left\{ {\left| {{t_{i + 1}} - {t_i}} \right|:i = 0,1, \cdots ,N - 1} \right\}.\]
For a given partition ${\pi _T^N}$ of $[0,T]$ and any given ${\xi _k} \in {Lip}\left( {{\Omega _{{t_k}}}} \right), k=0,1,2.\cdots,N-1$,
we define the simple process
		\[{\eta _t} := \sum\limits_{k = 0}^{N - 1} {{\xi _k}} {{\bf{1}}_{\left[ {{t_k},{t_{k + 1}}} \right)}}(t).\]
The totality of all simple processes is denoted by $M_G^{p,0}(0,T)$. Furthermore, we let
$M_G^p(0,T)$ be the completion of $M_G^{p,0}(0,T)$ under the norm
	\[{\left\| \eta  \right\|_{M_G^p(0,T)}}: = {\left( {\hat{\mathbb{E}}\left[ {\int_0^T {{{\left| {{\eta _t}} \right|}^p}} dt} \right]} \right)^{\frac{1}{p}}}.\]
Denote by $\bar M_G^p\left(0, T \right)$ the completion of $ M_G^{p,0}\left(0, T \right)$ under the norm
$$\left\| \eta  \right\|_{\bar{M}_G^p(0,T)}: = {\left( {\int_0^T {\hat{\mathbb{E}}\left[ {{{\left| {{\eta _t}} \right|}^p}} \right]} dt} \right)^{\frac{1}{p}}}.$$
Since $\hat{\mathbb{E}}$ is sub-additive, it is clear that $\bar M_G^p\left(0, T \right) \subset M_G^p\left(0, T \right)$.

For each fixed $\mathbf{a}\in {\mathbb{R}}^d$, let $B_t^{\mathbf{a}} := \left\langle {{\mathbf{a}},{B_t}} \right\rangle $. Then $(B_t^{\mathbf{a}})_{t\ge 0}$ is a $1$-dimensional $G_{\mathbf{a}}$-Brownian motion with ${G_{\mathbf{a}}}\left( \alpha  \right) = \frac{1}{2}\left( {\sigma _{{\mathbf{a}}{{\mathbf{a}}^T}}^2{\alpha ^ + } - \sigma _{ - {\mathbf{a}}{{\mathbf{a}}^T}}^2{\alpha ^ - }} \right)$, where ${\sigma_{\mathbf{a}{\mathbf{a}^T}}^2}=2G(\mathbf{a}\mathbf{a}^T) = {\mathbb{E}}\left[ {{{\left\langle {{\bf{a}},{B_1}} \right\rangle }^2}} \right]$ and
$\sigma_{ - {\mathbf{a}}{{\mathbf{a}}^T}}^2=-2G(-\mathbf{a}\mathbf{a}^T)= - {\mathbb{E}}\left[ { - {{\left\langle {{\bf{a}},{B_1}} \right\rangle }^2}} \right]$.
	\begin{definition}\label{integral1}
		For each $ \eta \in M_{G}^{2,0}(0,T)$ of the form
	$\eta_{t}=\sum_{k=0}^{N-1} \xi_{k} \mathbf{1}_{\left[t_{k}, t_{k+1}\right)}(t),$
		we define the stochastic integral
		\[ I(\eta)=\int_{0}^{T} \eta_{t} d B_{t}^{\mathbf{a}}:=\sum_{k=0}^{N-1} \xi_{k}\left(B_{t_{k+1}}^{\mathbf{a}}-B_{t_{k}}^{\mathbf{a}}\right).\]
		This can be continuously extended to $I:M_{G}^{2}([0, T]) \rightarrow L_{G}^{2}\left(\Omega_{T}\right)$. Then for each $ \eta \in M_{G}^{2}(0,T)$, we define the stochastic integral
		\[\int_{0}^{T} \eta_{t} d B_{t}^{\mathbf{a}}:=I(\eta).\]
	\end{definition}
	The following Lemma was established in \cite{peng2} (see Lemma 3.3.4 therein).
	\begin{lem}\label{inequality1}
		For each $\eta \in M_{G}^{2}(0,T) $,
		\[{\hat{\mathbb{E}}} \left [ \int_{0}^{T}\eta _{t}dB_{t}^{\mathbf{a}} \right ] =0,\]	
		\[\hat{\mathbb{E}} \left [ \left ( \int_{0}^{T}\eta _{t}dB_{t}^{{\mathbf{a}}}    \right )^2 \right ]   \le \sigma _{{\mathbf{a}}{\mathbf{a}}^{T} }^2 \hat{\mathbb{E}} \left [ \int_{0}^{T}\eta _{t}^2dt    \right ] . \]
	\end{lem}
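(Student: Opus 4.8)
The plan is the classical two-stage argument: establish both relations first for simple integrands $\eta\in M_G^{2,0}(0,T)$ and then transfer them to a general $\eta\in M_G^2(0,T)$ by density and continuity. For the transfer, once the second relation is known on $M_G^{2,0}(0,T)$ it reads $\|I(\eta)\|_{L_G^2}\le\sigma_{\mathbf{a}\mathbf{a}^T}\|\eta\|_{M_G^2(0,T)}$, so $I$ extends to a Lipschitz map and $\eta^n\to\eta$ in $M_G^2(0,T)$ with $\eta^n$ simple implies $I(\eta^n)\to I(\eta)$ in $L_G^2(\Omega_T)$; since $X\mapsto\hat{\mathbb{E}}[X]$ and $X\mapsto\hat{\mathbb{E}}[X^2]$ are continuous on $L_G^2(\Omega_T)$ (the latter because $|\hat{\mathbb{E}}[X^2]-\hat{\mathbb{E}}[Y^2]|\le\hat{\mathbb{E}}[|X-Y|\,|X+Y|]\le\|X-Y\|_{L_G^2}\|X+Y\|_{L_G^2}$) and $\eta\mapsto\hat{\mathbb{E}}[\int_0^T\eta_t^2\,dt]=\|\eta\|_{M_G^2(0,T)}^2$ is obviously continuous, both relations pass to the limit. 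For the simple case I would isolate two facts from the $G$-framework: (i) for each $k$ the increment $\Delta B_k:=B_{t_{k+1}}^{\mathbf{a}}-B_{t_k}^{\mathbf{a}}$ is independent of $Lip(\Omega_{t_k})$, so that $\hat{\mathbb{E}}$ of any (polynomially bounded) expression in $\Delta B_k$ and $Lip(\Omega_{t_k})$-functionals may be computed by first taking $\hat{\mathbb{E}}$ over $\Delta B_k$ with the latter frozen as constants; and (ii) $\hat{\mathbb{E}}[\Delta B_k]=\hat{\mathbb{E}}[-\Delta B_k]=0$, $\hat{\mathbb{E}}[(\Delta B_k)^2]=\sigma_{\mathbf{a}\mathbf{a}^T}^2(t_{k+1}-t_k)$, and $\Delta B_k$ is symmetric, which via sub-additivity used in both directions yields the elementary identity $\hat{\mathbb{E}}[(a+b\Delta B_k)^2]=a^2+b^2\sigma_{\mathbf{a}\mathbf{a}^T}^2(t_{k+1}-t_k)$ for all constants $a,b$.

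Fix a simple process $\eta_t=\sum_{k=0}^{N-1}\xi_k\mathbf{1}_{[t_k,t_{k+1})}(t)$ with $\xi_k\in Lip(\Omega_{t_k})$. The first relation follows by induction on $N$: writing $S=\sum_{k=0}^{N-2}\xi_k\Delta B_k$ and applying (i) to the last increment, $\hat{\mathbb{E}}[S+\xi_{N-1}\Delta B_{N-1}]=\hat{\mathbb{E}}[h(S,\xi_{N-1})]$ with $h(s,z):=\hat{\mathbb{E}}[s+z\Delta B_{N-1}]=s$ by (ii), hence equals $\hat{\mathbb{E}}[S]$; the base case is $\hat{\mathbb{E}}[\xi_0\Delta B_0]=0$, as $\xi_0\in Lip(\Omega_0)$ is a constant. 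For the second relation the delicate point is that one must not estimate $\hat{\mathbb{E}}[(\sum_k\xi_k\Delta B_k)^2]$ by $\sum_k\hat{\mathbb{E}}[\xi_k^2(\Delta B_k)^2]\le\sum_k\sigma_{\mathbf{a}\mathbf{a}^T}^2(t_{k+1}-t_k)\hat{\mathbb{E}}[\xi_k^2]$, since sub-additivity then makes the bound point the wrong way relative to $\hat{\mathbb{E}}[\int_0^T\eta_t^2\,dt]=\hat{\mathbb{E}}[\sum_k\xi_k^2(t_{k+1}-t_k)]$.

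Instead I would first prove the sharp identity
\[
\hat{\mathbb{E}}\Big[\Big(\int_0^T\eta_t\,dB_t^{\mathbf{a}}\Big)^2-\sigma_{\mathbf{a}\mathbf{a}^T}^2\int_0^T\eta_t^2\,dt\Big]=0
\]
for simple $\eta$, and then deduce the stated inequality by a single application of sub-additivity, namely $\hat{\mathbb{E}}[(I\eta)^2]\le\hat{\mathbb{E}}\big[(I\eta)^2-\sigma_{\mathbf{a}\mathbf{a}^T}^2\int_0^T\eta_t^2\,dt\big]+\sigma_{\mathbf{a}\mathbf{a}^T}^2\hat{\mathbb{E}}\big[\int_0^T\eta_t^2\,dt\big]$. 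The identity is proved by induction on $N$: with $S=\sum_{k=0}^{N-2}\xi_k\Delta B_k$ and $V=\sum_{k=0}^{N-2}\xi_k^2(t_{k+1}-t_k)$, one expands $(S+\xi_{N-1}\Delta B_{N-1})^2-\sigma_{\mathbf{a}\mathbf{a}^T}^2(V+\xi_{N-1}^2(t_N-t_{N-1}))$, applies (i) to $\Delta B_{N-1}$ and uses (ii), after the completion of the square $2S\xi_{N-1}\Delta B_{N-1}+\xi_{N-1}^2(\Delta B_{N-1})^2=(\xi_{N-1}\Delta B_{N-1}+S)^2-S^2$, to see that the whole expression has $\hat{\mathbb{E}}$ equal to $\hat{\mathbb{E}}[S^2-\sigma_{\mathbf{a}\mathbf{a}^T}^2 V]$, which vanishes by the induction hypothesis; the base case $N=1$ is $\hat{\mathbb{E}}[\xi_0^2((\Delta B_0)^2-\sigma_{\mathbf{a}\mathbf{a}^T}^2 t_1)]=\xi_0^2(\hat{\mathbb{E}}[(\Delta B_0)^2]-\sigma_{\mathbf{a}\mathbf{a}^T}^2 t_1)=0$. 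Equivalently, this identity records that $(\int_0^t\eta_s\,dB_s^{\mathbf{a}})^2-\sigma_{\mathbf{a}\mathbf{a}^T}^2\int_0^t\eta_s^2\,ds$ is a symmetric $G$-martingale.

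I expect the only genuine obstacle to be precisely this direction-of-inequality subtlety: one is forced to establish an exact equality for simple processes rather than a crude term-by-term estimate, and within the inductive step the cross term $2S\xi_{N-1}\Delta B_{N-1}$ and the diagonal term $\xi_{N-1}^2(\Delta B_{N-1})^2$ must be treated jointly inside a single use of (i)--(ii), since handling them separately again destroys exactness. The remaining ingredients — the inductive bookkeeping and the density/continuity limiting step — are routine.
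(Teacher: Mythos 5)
Your proposal is correct. Note that the paper itself gives no proof of this lemma: it is quoted verbatim from Peng's book (Lemma 3.3.4 there), so the only meaningful comparison is with that standard argument, which your proof follows in outline — establish the two relations for simple integrands using the independence of the increment $B^{\mathbf{a}}_{t_{k+1}}-B^{\mathbf{a}}_{t_k}$ from $Lip(\Omega_{t_k})$ together with $\hat{\mathbb{E}}[\pm\Delta B_k]=0$ and $\hat{\mathbb{E}}[(\Delta B_k)^2]=\sigma^2_{\mathbf{a}\mathbf{a}^T}\Delta t_k$, then transfer to $M_G^2(0,T)$ by density and the continuity of $X\mapsto\hat{\mathbb{E}}[X]$ and $X\mapsto\hat{\mathbb{E}}[X^2]$ on $L_G^2(\Omega_T)$. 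You correctly identify the one genuine trap: a term-by-term estimate forces sub-additivity in the wrong direction and would only yield the weaker bound with $\int_0^T\hat{\mathbb{E}}[\eta_t^2]\,dt$ in place of $\hat{\mathbb{E}}\bigl[\int_0^T\eta_t^2\,dt\bigr]$, and the cure is to keep the quadratic variation compensator inside a single expectation throughout the induction. Your variant differs from Peng's only in that you prove the exact identity $\hat{\mathbb{E}}\bigl[(\int_0^T\eta_t\,dB_t^{\mathbf{a}})^2-\sigma^2_{\mathbf{a}\mathbf{a}^T}\int_0^T\eta_t^2\,dt\bigr]=0$ rather than the one-sided estimate obtained by iterating $\hat{\mathbb{E}}[(\int_0^{t_k}\eta\,dB^{\mathbf{a}})^2+\sigma^2_{\mathbf{a}\mathbf{a}^T}\sum_{j\ge k}\xi_j^2\Delta t_j]$; the identity is indeed true (freezing the $\Omega_{t_{N-1}}$-measurable data, the completed square gives conditional value $S^2-\sigma^2_{\mathbf{a}\mathbf{a}^T}V$ exactly, since the coefficient $\xi_{N-1}^2\ge 0$ lets positive homogeneity extract the variance term without loss), and it is a mildly stronger statement that recovers the lemma with one application of sub-additivity. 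The only points to make explicit in a written version are that the independence/freezing rule is applied to locally Lipschitz functions of polynomial growth (squares and products of bounded cylinder functions with increments), which is the standard extension in the $G$-framework, and that $\xi_0\in Lip(\Omega_0)$ is a constant in the base case; with these remarks your argument is complete.
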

  The quadratic variation process of $B^{\mathbf{a}}$ is defined by
		\[\left\langle B^{\mathbf{a}}\right\rangle_{t}:=\lim _{\mu\left(\pi_T^{N}\right) \rightarrow 0} \sum_{k=0}^{N-1}\left(B_{t_{k+1}^{N}}^{\mathbf{a}}-B_{t_{k}^{N}}^{\mathbf{a}}\right)^{2}=\left(B_{t}^{\mathbf{a}}\right)^{2}-2 \int_{0}^{t} B_{s}^{\mathbf{a}} d B_{s}^{\mathbf{a}},\]
		which is not always a deterministic process as in the classical theory.
	\begin{definition}\label{integral2}
		We specify the mapping  $\mathcal{Q}_{0, T}: M_{G}^{1,0}([0, T]) \rightarrow L_{G}^{1}\left(\Omega_{T}\right)$ via
		\[\mathcal{Q}_{0, T}(\eta)=\int_{0}^{T} \eta_{t} d\left\langle B^{\mathbf{a}}\right\rangle_{t}:=\sum_{k=0}^{N-1}  \xi_{k}\left(\left\langle B^{\mathbf{a}}\right\rangle_{t_{k+1}}-\left\langle B^{\mathbf{a}}\right\rangle_{t_{k}}\right)\]	
	and $\mathcal{Q}_{0, T}$ can be uniquely extended to $ M_{G}^{1}([0, T]) \rightarrow L_{G}^{1}\left(\Omega_{T}\right)$. We still denote this mapping by
		\begin{center}
			$\int_{0}^{T} \eta_{s} d\left\langle B^{\mathbf{a}}\right\rangle_{s}:=\mathcal{Q}_{0, T}(\eta)$, for each  $\eta \in M_{G}^{1}(0, T).$
		\end{center}\par
	\end{definition}
	Let $\mathbf{a}$ and $ \overline{\mathbf{a}}$  be two given vectors in $ \mathbb{R}^{d}$. The mutual variation process of $B^{\mathbf{a}}$  and  $B^{\overline{\mathbf{a}}}$ is defined by
	\[	\left\langle B^{\mathbf{a}}, B^{\overline{\mathbf{a}}}\right\rangle_{t}:=\frac{1}{4}\left[\left\langle B^{\mathbf{a}+\overline{\mathbf{a}}}\right\rangle_{t}-\left\langle B^{\mathbf{a}-\overline{\mathbf{a}}}\right\rangle_{t}\right].\]
	Then, for each  $\eta \in M_{G}^{1}(0, T)$,
	\[\int_{0}^{T} \eta_{t} d\left\langle B^{\mathbf{a}}, B^{\overline{\mathbf{a}}}\right\rangle_{t}:=\frac{1}{4}\left(\int_{0}^{T} \eta_{t} d\left\langle B^{\mathbf{a}+\overline{\mathbf{a}}}\right\rangle_{t}-\int_{0}^{T} \eta_{t} d\left\langle B^{\mathbf{a}-\overline{\mathbf{a}}}\right\rangle_{t}\right).\]\par
The following BDG type inequalities can be found in Gao \cite{gao} (see Theorem 2.1 and 2.2 therein).
\begin{lem}\label{inequality2}
		Let $p \geq 2$, $\eta \in M_{G}^{p}(0, T)$ and $\mathbf{a} \in \mathbb{R}^{d}$. For $0 \le s \le t \le T$,
		\[\begin{aligned}
			\hat{\mathbb{E}}\left[ {\mathop {\sup }\limits_{s \le u \le t} {{\left| {\int_s^u {{\eta _r}} dB_r^{\bf{a}}} \right|}^p}} \right]  &\le {C_p}\sigma _{{\bf{a}}{{\bf{a}}^{\rm{T}}}}^p|t - s{|^{\frac{p}{2} - 1}}\hat{\mathbb{E}}\left[ {\int_s^t {{{\left| {{\eta _u}} \right|}^p}} du} \right] \\& \le {C_p}\sigma _{{\bf{a}}{{\bf{a}}^{\rm{T}}}}^p|t - s{|^{\frac{p}{2} - 1}}\left( {\int_s^t \hat{\mathbb{E}} \left[ {{{\left| {{\eta _u}} \right|}^p}} \right]du} \right),
		\end{aligned}\]
		where $C_{p} > 0$ is a constant independent of $\mathbf{a}, \eta$.	\end{lem}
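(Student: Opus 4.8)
The plan is to prove the pathwise maximal $L^p$ bound for the $G$-stochastic integral $M_u:=\int_s^u\eta_r\,dB_r^{\mathbf{a}}$ in three steps: (i) reduce to simple integrands, (ii) bound a single moment of the terminal value $M_t$, and (iii) upgrade to the running supremum; the second (weaker) inequality in the statement then follows immediately from the sub-additivity of $\hat{\mathbb{E}}$.

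First, by Definition \ref{integral1} and the density of $M_G^{p,0}(0,T)$ in $M_G^p(0,T)$, it suffices to establish the estimate for a simple process $\eta\in M_G^{p,0}(0,T)$ and then pass to the limit by the usual completion argument, using that $I:M_G^p\to L_G^p(\Omega_T)$ is continuous and that the running maxima of the approximating integrals form a Cauchy sequence in $L_G^p$. For simple $\eta$, $M$ is a piecewise $G$-martingale, so the martingale manipulations below are legitimate. Second, I would bound $\hat{\mathbb{E}}\big[|M_t|^p\big]$. For $p=2$ this is exactly Lemma \ref{inequality1}. For general $p\ge2$ there are two routes. \emph{Intrinsic route:} apply the $G$-It\^o formula to $u\mapsto(|M_u|^2+\varepsilon)^{p/2}$, control the $d\langle B^{\mathbf{a}}\rangle$-term via the density bound $d\langle B^{\mathbf{a}}\rangle_u\le\sigma_{\mathbf{a}\mathbf{a}^{T}}^2\,du$, take $\hat{\mathbb{E}}$ to kill the $dB^{\mathbf{a}}$-martingale term, close the resulting Gr\"onwall/H\"older loop, and let $\varepsilon\downarrow0$; here the factor $|t-s|^{p/2-1}$ enters through H\"older's inequality $\big(\int_s^t|\eta_u|^2\,du\big)^{p/2}\le|t-s|^{p/2-1}\int_s^t|\eta_u|^p\,du$. \emph{Representation route} (cleaner, and it disposes of Step (iii) simultaneously): use the Denis--Hu--Peng representation $\hat{\mathbb{E}}[\cdot]=\sup_{P\in\mathcal{P}}E_P[\cdot]$ (cf.\ \cite{denis}); under each $P\in\mathcal{P}$, $B^{\mathbf{a}}$ is a continuous $P$-martingale whose quadratic variation has density bounded by $\sigma_{\mathbf{a}\mathbf{a}^{T}}^2$, so the classical scalar Burkholder--Davis--Gundy inequality gives
\[
E_P\Big[\sup_{s\le u\le t}|M_u|^p\Big]\le C_p\,E_P\Big[\Big(\int_s^t\eta_r^2\,d\langle B^{\mathbf{a}}\rangle_r\Big)^{p/2}\Big]\le C_p\,\sigma_{\mathbf{a}\mathbf{a}^{T}}^p\,E_P\Big[\Big(\int_s^t\eta_r^2\,dr\Big)^{p/2}\Big],
\]
with $C_p$ depending only on $p$; H\"older's inequality turns the last term into $|t-s|^{p/2-1}E_P\big[\int_s^t|\eta_r|^p\,dr\big]$, and taking the supremum over $P$ (noting $\sup_{P}E_P[\int_s^t|\eta_r|^p\,dr]=\hat{\mathbb{E}}[\int_s^t|\eta_r|^p\,dr]$) yields the first asserted inequality directly.

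Third, if one follows the intrinsic route, the passage from $\hat{\mathbb{E}}[|M_t|^p]$ to $\hat{\mathbb{E}}[\sup_{s\le u\le t}|M_u|^p]$ is supplied by Doob's maximal inequality in the $G$-framework: since $M$ is a symmetric $G$-martingale, $|M|$ is a $G$-submartingale and $\hat{\mathbb{E}}[\sup_{s\le u\le t}|M_u|^p]\le(\tfrac{p}{p-1})^p\,\hat{\mathbb{E}}[|M_t|^p]$; chaining this with Step (ii) gives the first inequality. Finally, the second inequality follows from the sub-additivity of $\hat{\mathbb{E}}$, which yields $\hat{\mathbb{E}}\big[\int_s^t|\eta_u|^p\,du\big]\le\int_s^t\hat{\mathbb{E}}[|\eta_u|^p]\,du$.

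I expect the main obstacle to lie on the sublinear side of the core estimate. In the intrinsic route the $G$-It\^o computation must be tracked carefully, since $\hat{\mathbb{E}}$ is only sub-additive — one obtains merely one-sided control after taking expectations, which is precisely what the statement needs but is easy to mishandle — and Doob's inequality for $G$-submartingales is not a formal consequence of its classical counterpart. Routing through the representation theorem removes both difficulties, but then the obstacle shifts to the quasi-continuity/measurability prerequisites: one must verify that $\sup_{s\le u\le t}|M_u|$ lies in the space on which $\hat{\mathbb{E}}[\cdot]=\sup_{P\in\mathcal{P}}E_P[\cdot]$ acts and that the classical BDG constant is genuinely uniform over $\mathcal{P}$ (which it is, depending on $p$ alone). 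The remaining steps — the density reduction and the two H\"older estimates — are routine.
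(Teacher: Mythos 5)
Your proposal is correct in outline, and the representation route you favour (classical Burkholder--Davis--Gundy under each measure of the family representing $\hat{\mathbb{E}}$, plus H\"older to produce the factor $|t-s|^{\frac{p}{2}-1}$ and sub-additivity for the final bound) is exactly the standard argument behind this lemma; the paper itself gives no proof, simply quoting Theorems 2.1 and 2.2 of Gao \cite{gao} and remarking that the extension from $\bar M_G^p(0,T)$ to $M_G^p(0,T)$ follows by similar (density) arguments, which is precisely what your approximation step supplies. So your attempt matches the approach the paper relies on, with the quasi-continuity/measurability caveat you flag being the only point needing the usual care.
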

	\begin{lem}\label{inequality3}
		Let $p \geq 1$, $\eta \in M_{G}^{p}(0, T)$ and ${\bf{a}}, {\bf{\bar a}}\in \mathbb{R}^{d}$ and $0 \le s \le t \le T$, then
		\[\begin{aligned}
			{\hat{\mathbb{E}}}\left[\sup _{s \leq u \leq t}\left|\int_{s}^{u} \eta_{r} d\left\langle B^{\mathbf{a}}, B^{\overline{\mathbf{a}}}\right\rangle_{r}\right|^{p}\right] &\leq
			{\left( {\frac{{\sigma _{({\bf{a}} + {\bf{\bar a}}){{({\bf{a}} + {\bf{\bar a}})}^T}}^2 + \sigma _{({\bf{a}} - {\bf{\bar a}}){{({\bf{a}} - {\bf{\bar a}})}^T}}^2}}{4}} \right)^p}{(t - s)^{p - 1}}\hat{\mathbb{E}}\left[ {\int_s^t {{{\left| {{\eta _u}} \right|}^p}} du} \right]\\&\le
			{\left( {\frac{{\sigma _{({\bf{a}} + {\bf{\bar a}}){{({\bf{a}} + {\bf{\bar a}})}^T}}^2 + \sigma _{({\bf{a}} - {\bf{\bar a}}){{({\bf{a}} - {\bf{\bar a}})}^T}}^2}}{4}} \right)^p}{(t - s)^{p - 1}}\int_s^t \hat{\mathbb{E}} \left[ {{{\left| {{\eta _u}} \right|}^p}} \right]du.
		\end{aligned}\]
	\end{lem}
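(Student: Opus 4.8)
Since the estimate is recorded in Gao \cite{gao}, the quickest route is simply to cite it; but if a self-contained argument is wanted, the plan is to follow the standard two-step scheme — prove the bound for simple integrands, then pass to the limit by density.

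First I would reduce to a simple process. By construction $M_G^p(0,T)$ is the completion of $M_G^{p,0}(0,T)$, and by Definition \ref{integral2} the map $\eta\mapsto\int_0^\cdot\eta_r\,d\langle B^{\mathbf{a}},B^{\overline{\mathbf{a}}}\rangle_r$ is continuous on $M_G^p(0,T)$; so it is enough to treat $\eta_t=\sum_{k=0}^{N-1}\xi_k\mathbf{1}_{[t_k,t_{k+1})}(t)$, and a general $\eta$ is then handled by approximating $\eta^n\to\eta$ in $M_G^p(0,T)$ and applying the (already established) estimate to $\eta^n-\eta^m$ to see that $\sup_{s\le u\le t}\bigl|\int_s^u\eta^n_r\,d\langle B^{\mathbf{a}},B^{\overline{\mathbf{a}}}\rangle_r\bigr|$ is Cauchy in $L_G^p(\Omega_T)$, which legitimizes passing to the limit under $\hat{\mathbb{E}}$. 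Refining the partition, I may assume $s,t$ are partition points.

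The heart of the matter is the pathwise control of the mutual variation. For any $\mathbf{c}\in\mathbb{R}^d$ the process $\langle B^{\mathbf{c}}\rangle$ is non-decreasing with increments dominated quasi-surely by $\sigma^2_{\mathbf{c}\mathbf{c}^T}\,dr$; together with the polarization identity $\langle B^{\mathbf{a}},B^{\overline{\mathbf{a}}}\rangle=\tfrac14\bigl(\langle B^{\mathbf{a}+\overline{\mathbf{a}}}\rangle-\langle B^{\mathbf{a}-\overline{\mathbf{a}}}\rangle\bigr)$ this gives, for $t_k\le r\le t_{k+1}$,
\[\bigl|\langle B^{\mathbf{a}},B^{\overline{\mathbf{a}}}\rangle_r-\langle B^{\mathbf{a}},B^{\overline{\mathbf{a}}}\rangle_{t_k}\bigr|\le K\,(r-t_k),\qquad K:=\frac{\sigma^2_{(\mathbf{a}+\overline{\mathbf{a}})(\mathbf{a}+\overline{\mathbf{a}})^T}+\sigma^2_{(\mathbf{a}-\overline{\mathbf{a}})(\mathbf{a}-\overline{\mathbf{a}})^T}}{4}.\]
Then for every $u\in[s,t]$, expanding $\int_s^u\eta_r\,d\langle B^{\mathbf{a}},B^{\overline{\mathbf{a}}}\rangle_r$ over the subintervals and bounding each term gives $\bigl|\int_s^u\eta_r\,d\langle B^{\mathbf{a}},B^{\overline{\mathbf{a}}}\rangle_r\bigr|\le\sum_k|\xi_k|\,K(t_{k+1}-t_k)$, uniformly in $u$; the discrete Hölder inequality with exponents $p$ and $p/(p-1)$ against the weights $K(t_{k+1}-t_k)$, of total mass $K(t-s)$, then yields
\[\sup_{s\le u\le t}\Bigl|\int_s^u\eta_r\,d\langle B^{\mathbf{a}},B^{\overline{\mathbf{a}}}\rangle_r\Bigr|^p\le\bigl(K(t-s)\bigr)^{p-1}\sum_k|\xi_k|^pK(t_{k+1}-t_k)=K^p(t-s)^{p-1}\int_s^t|\eta_r|^p\,dr.\]
Applying $\hat{\mathbb{E}}$ gives the first inequality, and $\hat{\mathbb{E}}\bigl[\int_s^t|\eta_r|^p\,dr\bigr]\le\int_s^t\hat{\mathbb{E}}[|\eta_r|^p]\,dr$ — which is just sub-additivity of $\hat{\mathbb{E}}$, equivalently the inclusion $\bar M_G^p(0,T)\subset M_G^p(0,T)$ noted above — gives the second.

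The computations here are routine; the only point needing genuine care is the limiting argument of the first step, namely verifying that the supremum functional behaves continuously under $M_G^p$-convergence of the integrands, so that the estimate survives the passage from $M_G^{p,0}(0,T)$ to $M_G^p(0,T)$.
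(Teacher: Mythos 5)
Your proposal is correct, but note that the paper itself contains no proof of this lemma: it simply cites Theorem 2.2 of Gao \cite{gao} and, in the remark immediately following, observes that Gao's statement is for $\eta\in\bar M_G^p(0,T)$ while the version for $\eta\in M_G^p(0,T)$ follows ``by similar arguments,'' which are omitted. Your first sentence (cite Gao) therefore coincides with what the paper actually does, and your self-contained argument reconstructs the standard proof behind Gao's theorem: the quasi-sure bounds $0\le\langle B^{\mathbf c}\rangle_t-\langle B^{\mathbf c}\rangle_s\le\sigma^2_{\mathbf c\mathbf c^T}(t-s)$ together with polarization give the pathwise Lipschitz control of the mutual variation, and the discrete H\"older step for simple integrands plus density yields the claim. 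Your density step is in fact the substantive content of the paper's remark: because the estimate for simple processes is phrased with $\hat{\mathbb{E}}\bigl[\int_s^t|\eta_u|^p\,du\bigr]$ on the right-hand side, approximation in the $M_G^p$-norm (rather than the $\bar M_G^p$-norm) suffices, and the elementary observation $\bigl|\sup_u|A^n_u|-\sup_u|A_u|\bigr|\le\sup_u|A^n_u-A_u|$ handles the continuity of the supremum functional that you rightly flag as the only delicate point. Two small remarks: the sum bounding $\bigl|\int_s^u\eta_r\,d\langle B^{\mathbf a},B^{\overline{\mathbf a}}\rangle_r\bigr|$ should be restricted to the subintervals meeting $[s,u]$, which is harmless since all terms are nonnegative; and the inequality $\hat{\mathbb{E}}\bigl[\int_s^t|\eta_u|^p\,du\bigr]\le\int_s^t\hat{\mathbb{E}}[|\eta_u|^p]\,du$ is what \emph{implies} the inclusion $\bar M_G^p(0,T)\subset M_G^p(0,T)$ (it is proved by sub-additivity on simple processes and a limit), rather than being equivalent to it — a phrasing issue, not a gap.
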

	\begin{remark}
	We would like to pointed out that Gao \cite{gao} (see Theorem 2.1 and 2.2 therein) proved Lemmas \ref{inequality2} and \ref{inequality3} for $\eta \in \bar M_{G}^{p}(0, T)$. While Lemmas \ref{inequality2} and \ref{inequality3} can be verified, respectively, for $\eta \in  M_{G}^{p}(0, T)$ by utilising similar arguments, therefore we omit their proofs here.
	\end{remark}\par
	In the remaining of this paper, we write by $B^{i}=B^{{\mathbf{e}}_{i} }$  for the $i$-th coordinate of the $G$-Brownian motion $B$, under a given orthonormal basis $\left ( \mathbf{e}_{1},\dots \mathbf{e}_{d}   \right )$ in the space ${\mathbb{R}}^d$. For $1\le i\le j\le d$, we denote
$\left \langle B \right \rangle _{t}^{ij} :={\left \langle B^{i}, B^{j} \right \rangle}_{t}.$

\section{Complete metric spaces}
For any $r >0$, let ${{\cal {D}}_r}$ denote the space of functionals $f$ on ${C_{Lip}}\left( {{\mathbb{R}^n}} \right)$ satisfying monotonicity, constants preserving, sub-additivity and positive homogeneity, and further fulfilling the following
\begin{center}\
$\mathop {\sup }\limits_{{C_\varphi } \le r} \left| {f\left( \varphi  \right) - {{\mathbb{F}}_{\bf{0}}}}(\varphi) \right| = \mathop {\sup }\limits_{{C_\varphi } \le r} \left| {f\left( \varphi  \right) - \varphi \left( {\bf{0}} \right)} \right| < \infty ,$
\end{center}
where ${\mathbb{F}}_{\bf{0}}(\varphi):=\varphi({\bf{0}})$ and ${\bf{0}}$ is the zero vector in $\mathbb{R}^n$.
We define the following metric $d_r$ on ${\cal {D}}_r $
	\begin{center}
		$d_r\left( {{f^1},{f^2}} \right) := \mathop {\sup }\limits_{\scriptstyle{C_\varphi } \le {r}\hfill} \left| {{f^1}\left( \varphi  \right) - {f^2}\left( \varphi  \right)} \right|$, for $f^{1} , f^{2} \in {\cal {D}}_r $.
	\end{center}
	Similarly, let ${\cal {D}}_r^T$ denote the space of functional processes $F=(F_t)_{t\in[0,T]}$ on $ \left[ {0,T} \right]  \times  {C_{Lip}}\left( {{{\mathbb{R}}^n}} \right) $ satisfying that $F_t\in \mathcal{D}_r$ and
	\begin{center}
	$\mathop {\sup }\limits_{t \in [0,T]} \mathop {\sup }\limits_{{C_\varphi } \le r} \left| {F_t\left( \varphi  \right) - {{{\mathbb{F}}_{\bf{0}}}}} \right| = \mathop {\sup }\limits_{t \in [0,T]} \mathop {\sup }\limits_{{C_\varphi } \le r} \left| {F_t\left( \varphi  \right) - \varphi \left( {\bf{0}} \right)} \right| < \infty.$
    \end{center}
Then, we can introduce the following metric $d_r^T$ on ${{\cal {D}}_r^T} $
	\begin{center}
		$d_r^T\left( {{F^1},{F^2}} \right) := \mathop {\sup }\limits_{t \in [0,T]} \mathop {\sup }\limits_{
			\scriptstyle{C_\varphi } \le {r}\hfill} \left| {{F_t^1}(\varphi )}-{{F_t^2}(\varphi )} \right|$, for  $F^{1} , F^{2} \in {\cal {D}}_r^T $.
	\end{center}

	\begin{prop}\label{space}
		The spaces ${{\cal {D}}_r}$ and ${{\cal {D}}^T_r}$ are independent of $r>0$ so that they can be denoted by ${\cal {D}}$ and ${\cal {D}}^T$,
		respectively. Moreover, for any $r>0$, we have 	${d_r} = r{d_1}$ and ${d_r^T} = r{d_1^T}$.
	\end{prop}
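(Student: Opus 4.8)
The plan is to prove the two assertions simultaneously, since the scaling relation $d_r = r d_1$ essentially forces the set equality $\mathcal{D}_r = \mathcal{D}_1$. First I would establish the scaling behaviour of the defining quantity. Fix $\varphi \in C_{Lip}(\mathbb{R}^n)$ with $C_\varphi \le r$ and set $\psi := \varphi / r$ (or more carefully, $\psi(x) := \tfrac{1}{r}\varphi(x)$). Then $C_\psi = C_\varphi / r \le 1$, and conversely every $\psi$ with $C_\psi \le 1$ arises this way from $\varphi := r\psi$ with $C_\varphi \le r$. Now, any functional $f$ satisfying constants preserving and positive homogeneity satisfies $f(r\psi) = r f(\psi)$ for $r > 0$ (this is exactly positive homogeneity applied to the scalar $r \in \mathbb{R}^+$), and likewise $\mathbb{F}_{\mathbf 0}(r\psi) = (r\psi)(\mathbf 0) = r\psi(\mathbf 0) = r\,\mathbb{F}_{\mathbf 0}(\psi)$. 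Hence for any $f$ satisfying the four sublinear-expectation axioms,
\[
\sup_{C_\varphi \le r} \bigl| f(\varphi) - \mathbb{F}_{\mathbf 0}(\varphi)\bigr|
= \sup_{C_\psi \le 1} \bigl| f(r\psi) - \mathbb{F}_{\mathbf 0}(r\psi)\bigr|
= r \sup_{C_\psi \le 1} \bigl| f(\psi) - \mathbb{F}_{\mathbf 0}(\psi)\bigr|.
\]
Therefore the left-hand side is finite if and only if the $r=1$ version is finite, which shows $\mathcal{D}_r = \mathcal{D}_1$ as sets; I would then write $\mathcal{D}$ for the common space.

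Next I would derive $d_r = r d_1$ by the same change of variables. Given $f^1, f^2 \in \mathcal{D}$, positive homogeneity of each $f^i$ gives $f^i(\varphi) = f^i(r\psi) = r f^i(\psi)$ whenever $\varphi = r\psi$, so
\[
d_r(f^1, f^2) = \sup_{C_\varphi \le r} \bigl| f^1(\varphi) - f^2(\varphi)\bigr|
= \sup_{C_\psi \le 1} r\,\bigl| f^1(\psi) - f^2(\psi)\bigr|
= r\, d_1(f^1, f^2).
\]
In particular, since $d_1$ is a metric (finiteness on $\mathcal{D}$ follows from the triangle inequality $|f^1(\varphi) - f^2(\varphi)| \le |f^1(\varphi) - \varphi(\mathbf 0)| + |f^2(\varphi) - \varphi(\mathbf 0)|$ together with membership in $\mathcal{D}$, and symmetry and the triangle inequality are immediate), $d_r$ is a metric for every $r>0$, and the two induce the same topology.

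For the process spaces $\mathcal{D}_r^T$ and $d_r^T$, the argument is identical, carried out uniformly in $t \in [0,T]$: membership of $F = (F_t)_{t\in[0,T]}$ in $\mathcal{D}_r^T$ requires $F_t \in \mathcal{D}_r = \mathcal{D}$ for each $t$ plus the uniform-in-$t$ finiteness condition, and applying the pointwise-in-$t$ scaling identity above inside the outer $\sup_{t\in[0,T]}$ yields both $\mathcal{D}_r^T = \mathcal{D}_1^T =: \mathcal{D}^T$ and $d_r^T = r d_1^T$. I do not anticipate a genuine obstacle here; the only point requiring a little care is the bookkeeping of the bijection $\varphi \leftrightarrow r\psi$ between the index sets $\{C_\varphi \le r\}$ and $\{C_\psi \le 1\}$, and the observation that positive homogeneity is precisely what makes $f$ commute with this rescaling — everything else is a direct substitution.
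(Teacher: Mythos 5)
Your proposal is correct and follows essentially the same route as the paper: rescaling a test function $\varphi$ with $C_\varphi\le r$ to $\psi=\varphi/r$ and using positive homogeneity of $f$ and of $\mathbb{F}_{\mathbf 0}$ to pull the factor $r$ out of the supremum. The only (harmless) differences are that you exploit the bijection $\varphi\leftrightarrow r\psi$ to get exact equalities where the paper uses two one-sided inequalities, and you write out the metric identity $d_r=rd_1$ and the $\mathcal{D}_r^T$ case explicitly, which the paper omits as ``similar.''
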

\begin{proof}
We only prove the first assertion for ${\cal {D}}$, the proofs of other assertions are similar, so are omitted.
In fact, for any $ f\in \mathcal{D}_1$, we have for arbitrarily fixed $r>0$
\[\mathop {\sup }\limits_{{C_\varphi } \le r} \left| f(\varphi)-{\mathbb{F}}_{\bf{0}}(\varphi) \right|
= r\mathop {\sup }\limits_{{C_{\frac{\varphi }{r}}} \le 1} \left|  f(\frac{\varphi}{r})-{{\mathbb{F}}_{\bf{0}}}(\frac{\varphi}{r})  \right|
\le r\mathop {\sup }\limits_{{C_{\varphi} \le 1}} \left|  f(\varphi)-{\mathbb{F}}_{\bf{0}}(\varphi )  \right| <\infty,\]
thus, $ f\in \mathcal{D}_r$ for $r>0$.

On the other hand, let us arbitrarily fix $r>0$. Then, for any $ f\in \mathcal{D}_r$, we have
\[\mathop {\sup }\limits_{{C_\varphi } \le 1} \left| f(\varphi)-{\mathbb{F}}_{\bf{0}}(\varphi) \right|
= \frac{1}{r}\mathop {\sup }\limits_{{C_{r\varphi}} \le r} \left|  f(r\varphi)-{{\mathbb{F}}_{\bf{0}}}(r\varphi)  \right|
\le  \frac{1}{r}\mathop {\sup }\limits_{{C_{\varphi} \le r}} \left|  f(\varphi)-{\mathbb{F}}_{\bf{0}}(\varphi )  \right| <\infty,\]
thus, $ f\in \mathcal{D}_1$.
Hence, $\mathcal{D}_r\equiv \mathcal{D}_1=:\mathcal{D}$. The proof of  Proposition \ref{space} is completed.
\end{proof}
\begin{remark}\label{2w}
It follows from Proposition \ref{space} that for all $r>0$, $d_r$ are equivalent distances on $\mathcal{D}$, so we will just consider the metric $d_1$ on
$\mathcal{D}$ for simplicity. Similarly, we will only consider the metric $d_1^T$ on the space ${{{\cal {D}}}^T}$.
\end{remark}
\begin{remark}
If $f\in \mathcal{D}$ is the distribution of $X$ on the sublinear expectation space $(\Omega, \mathcal{H},\mathbb{E})$, then it is easy to check that \begin{center}\
$\mathop {\sup }\limits_{{C_\varphi } \le 1} \left| {f\left( \varphi  \right) - {{\mathbb{F}}_{\bf{0}}}}(\varphi) \right| = d_1(\mathbb{F}_X,{\mathbb{F}}_{\bf{0}})=\mathbb{E}[|X|]. $
\end{center}

\end{remark}
\begin{remark}\label{1w}
It is worthwhile to point out that in case the sublinear expectation space $(\Omega,\mathcal{H},\mathbb{E})$ degenerates into a usual single probability space $(\Omega,\mathcal{F},P)$, for $\mu,\nu$ being the corresponding probability measures on $(\mathbb{R}^n ,\mathcal{B}(\mathbb{R}^n))$  induced by the random vectors $X$ and $Y$, respectively, then the 1-Wasserstein distance of $\mu,\nu$ equals to $d_1(\mathbb{F}_X,\mathbb{F}_Y)$. This can be verified by the duality theorem of Kantorovich and Rubinstein \cite{Kan}.
\end{remark}
\begin{prop}\label{complete}
		 Both metric spaces $(\mathcal{D},d_1)$ and $(\mathcal{D}^T,d_1^T)$ are complete metric spaces.
	\end{prop}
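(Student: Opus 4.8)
The plan is to prove completeness of $(\mathcal{D}, d_1)$ in full detail and then observe that $(\mathcal{D}^T, d_1^T)$ follows by exactly the same argument performed uniformly over $t \in [0,T]$. So let $(f^k)_{k\ge 1}$ be a $d_1$-Cauchy sequence in $\mathcal{D}$. The first step is to produce a candidate limit by evaluating pointwise: fix $\varphi \in C_{Lip}(\mathbb{R}^n)$. If $C_\varphi = 0$, i.e.\ $\varphi$ is constant, then constants preserving gives $f^k(\varphi) = \varphi({\bf 0})$ for every $k$, so $(f^k(\varphi))_k$ converges trivially; if $C_\varphi > 0$, then by Proposition \ref{space} (applied with $r = C_\varphi$) we get $|f^k(\varphi) - f^m(\varphi)| \le d_{C_\varphi}(f^k,f^m) = C_\varphi\, d_1(f^k,f^m) \to 0$ as $k,m\to\infty$, so $(f^k(\varphi))_k$ is Cauchy in $\mathbb{R}$ and converges. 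Define $f(\varphi) := \lim_k f^k(\varphi)$.

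Next I would verify $f \in \mathcal{D}$. Monotonicity, constants preserving, sub-additivity and positive homogeneity are all preserved under pointwise limits of real sequences (e.g.\ $f^k(\varphi+\psi) \le f^k(\varphi)+f^k(\psi)$ for all $k$ yields $f(\varphi+\psi)\le f(\varphi)+f(\psi)$), so $f$ inherits them from the $f^k$. The only point requiring a small argument is the finiteness condition: since a Cauchy sequence is bounded, there is $K < \infty$ with $d_1(f^k, f^1) \le K$ for all $k$, and hence for any $\varphi$ with $C_\varphi \le 1$,
\[
|f(\varphi) - \varphi({\bf 0})| = \lim_k |f^k(\varphi) - \varphi({\bf 0})| \le K + \sup_{C_\varphi \le 1}\big|f^1(\varphi) - \varphi({\bf 0})\big| < \infty,
\]
the last supremum being finite because $f^1 \in \mathcal{D}$; taking the supremum over $\varphi$ shows $f \in \mathcal{D}$. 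Finally, for convergence in $d_1$: given $\ep > 0$ choose $N$ with $\sup_{C_\varphi \le 1}|f^k(\varphi) - f^m(\varphi)| < \ep$ for all $k, m \ge N$; fixing $k \ge N$ and a $\varphi$ with $C_\varphi \le 1$ and letting $m \to \infty$ gives $|f^k(\varphi) - f(\varphi)| \le \ep$, whence $d_1(f^k, f) \le \ep$. This establishes completeness of $(\mathcal{D}, d_1)$.

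For $(\mathcal{D}^T, d_1^T)$ the argument is verbatim the same, carried out with an extra supremum over $t$: given a $d_1^T$-Cauchy sequence $(F^k)$, for each fixed pair $(t,\varphi)$ the scalars $F^k_t(\varphi)$ form a Cauchy sequence (again splitting on whether $C_\varphi = 0$ and using $d_r^T = r\, d_1^T$ from Proposition \ref{space}), so one sets $F_t(\varphi) := \lim_k F^k_t(\varphi)$; then $F_t \in \mathcal{D}$ for each fixed $t$ exactly as above, the uniform-in-$t$ finiteness bound for $F$ survives because the sequence is $d_1^T$-bounded, and $d_1^T(F^k, F) \to 0$ follows as before. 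I do not anticipate any genuine obstacle here: the only mild subtleties are to handle constant $\varphi$ separately when deriving the pointwise Cauchy estimate, and to invoke boundedness of the Cauchy sequence when checking that the pointwise limit still satisfies the defining finiteness requirement, so that it genuinely lies in $\mathcal{D}$ (resp.\ $\mathcal{D}^T$) rather than merely being a pointwise limit of elements of it.
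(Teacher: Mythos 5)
Your proof is correct and follows essentially the same route as the paper's: pointwise limits obtained via the scaling identity $d_r=r\,d_1$ (the paper achieves the same thing by defining the limit for $C_\varphi\le 1$ and extending by positive homogeneity), preservation of the four defining properties under pointwise limits, a triangle-inequality bound against one fixed member of the Cauchy sequence to verify the finiteness condition, and letting $m\to\infty$ in the Cauchy estimate to get convergence in the metric. The only cosmetic differences are that you spell out $(\mathcal{D},d_1)$ while the paper spells out $(\mathcal{D}^T,d_1^T)$ and sketches the other, and that you treat the constant-$\varphi$ case explicitly, which the paper glosses over.
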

	\begin{proof} We will only show that $(\mathcal{D}^T,d_1^T)$ is complete, the other can be proved similarly. The proof will be divided into three steps.\par
	\textbf{Step 1.~}If $\left\{ {{F^n}} \right\}_{n=1}^{\infty}$ is a Cauchy sequence in ${ \cal {D}}^T$, then for any $\varepsilon  > 0$, there exists $N >0$ such that  for all $n,m \ge N$,  $ t \in [0,T]$, $\varphi  \in {C_{Lip}}\left( {{{\mathbb{R}}^n}} \right)$ satisfying ${{C_\varphi }}\le 1$, we have
		\begin{equation}\label{step1}
			\left| {F_t^n(\varphi ) - F_t^m(\varphi )} \right| \le d_1^T({F^n},{F^m}) < \varepsilon.
		\end{equation}
	Therefore, for any fixed $t\in[0,T]$, $\left\{ {F_t^n(\varphi )} \right\}_{n=1}^{\infty}$ is a Cauchy sequence in  $\mathbb{R}$, and it definitely converges. Let ${\tilde F_t}\left( \varphi  \right)$ denote the limit of $\left\{ {F_t^n\left( \varphi  \right)} \right\}_{n=1}^{\infty}$, that is
		\[ {\tilde F_t}\left( \varphi  \right):=\mathop {\lim }\limits_{n \to \infty } F_t^n(\varphi ).\]
		For all $t \in [0,T]$, $\psi  \in {C_{Lip}}\left( {{{\mathbb{R}}^n}} \right)$, we have ${{C_{\frac{\psi}{C_\psi}} }}  \le 1$. Define
		\[{F_t}(\psi): = C_{\psi}{\tilde F_t}\left(\frac{\psi}{C_\psi}\right) = \mathop {\lim }\limits_{n \to \infty } F_t^n(\psi).\]
		Clearly, $F$ is a functional process on ${C_{Lip}}\left( {{{\mathbb{R}}^n}} \right) \times \left[ {0,T} \right]$. \par
		\textbf{Step 2.~} We want to verify that $F \in { \cal {D}}^T$. Taking $m \to \infty$ in inequality (\ref{step1}), we obtain
		for any $\varepsilon  > 0$, there exists $N >0$ such that  for all $n \ge N$, $ t \in [0,T]$, $\varphi  \in {C_{Lip}}\left( {{{\mathbb{R}}^n}} \right)$ satisfying $C_{\varphi}\le 1$, that the following holds
		\begin{equation}\label{step2}
			\left| {F_t^n(\varphi ) - {F_t}(\varphi )} \right| < \varepsilon.
		\end{equation}
Since $F^N \in { \cal {D}}^T$, there exists $ k > 0$ such that for all  $ t \in [0,T]$, $ \varphi  \in {C_{Lip}}\left( {{{\mathbb{R}}^n}} \right)$ satisfying $C_{\varphi}\le 1$,
	    \[\left| {F_t^N\left( \varphi  \right) - \varphi \left( {\bf{0}} \right)} \right| \le k.\]
	    Therefore, for all  $ t \in [0,T]$, $ \varphi  \in {C_{Lip}}\left( {{{\mathbb{R}}^n}} \right)$ satisfying $C_{\varphi}\le 1$, we have
	   \[\left| {{F_t}\left( \varphi  \right) - \varphi \left( {\bf{0}} \right)} \right| \le \left| {F_t^N\left( \varphi  \right) - \varphi \left( {\bf{0}} \right)} \right| + \left| {{F_t}\left( \varphi  \right) - F_t^N\left( \varphi  \right)} \right| \le k + \varepsilon .\]
	    Hence
	   \[\mathop {\sup }\limits_{t \in [0,T]} \mathop {\sup }\limits_{{C_\varphi } \le 1} \left| {{F_t}\left( \varphi  \right) - \varphi \left( {\bf{0}} \right)} \right| < \infty .\]
	    It is straightforward that $F_t$ is monotonic, constants preserved,
	    sub-additive and positive homogeneous, thus $F \in { \cal {D}}^T$.\par
	    \textbf{Step 3.~} From inequality (\ref{step2}), we get
		\[d_1^T\left( {{F^n},{F}} \right) = \mathop {\sup }\limits_{t \in [0,T]} \mathop {\sup }\limits_{
			\scriptstyle{C_\varphi } \le {1}\hfill} \left| {{F_t^n}(\varphi )}-{{F_t}(\varphi )} \right| < \varepsilon .\]
		Therefore, ${ \cal {D}}^T$ is a complete metric space with respect to $d_1^T$. The proof of Proposition \ref{complete} is then completed.
\end{proof}

	\section{Existence and uniqueness of distribution dependent SDEs driven by $G$-Brownian motion}
In this final section, we are concerned with the following distribution dependent SDE driven by $G$-Brownian motion (in short, distribution dependent $G$-SDE)
	\begin{equation}\label{DDGSDEs}
		X_{t}=x_0+\int_{0}^{t} b\left(s, X_{s}, {\mathbb{F}}_{s}^{X}\right) d s+\int_{0}^{t} h_{i j}\left(s, X_{s}, {\mathbb{F}}_{s}^{X}\right) d\langle B\rangle _{s}^{i j}+\int_{0}^{t} \sigma_{j}\left(s, X_{s}, {\mathbb{F}}_{s}^{X}\right) d B_{s}^{j},\ t \in[0, T].
	\end{equation}
Here we are using Einstein summation convention, that is, the repeated indices $i$ and $j$ in the above equation means tacitly the summation over those indices.
The initial data $x_0 \in {\mathbb{R}^n}$ is a constant vector, $\left({\mathbb{F}}_{t}^{X}\right)_{t \in[0, T]}$ is the distribution of $\left(X_{t}\right)_{t \in[0, T]}$, $b,h_{i j},\sigma_{j}: [0, T] \times {\mathbb{R}^n} \times {\cal {D}} \rightarrow {\mathbb{R}^n}$ are functionals satisfying the following assumptions (H1) and (H2):\\
		(H1) there exists $K>0$ such that for all $t \in[0, T],\  x, y \in \mathbb{R}^n, f^1, f^2 \in \cal{D}$, we have
				 \[\begin{aligned}&
		\left| {b(t,x,{f^1}) - b(t,y,{f^2})} \right| + \left| {{h_{ij}}(t,x,{f^1}) - {h_{ij}}(t,y,{f^2})} \right| + \left| {{\sigma _j}(t,x,{f^1}) - {\sigma _j}(t,y,{f^2})} \right| \\&\leq K(|x-y|+d_1(f^1, f^2));
		\end{aligned}\]
		(H2) ${b( \cdot ,x,F_{\cdot}),{h_{ij}}( \cdot ,x,F_{\cdot}),{\sigma _j}( \cdot ,x,F_{\cdot}) \in L^2(0,T;{\mathbb{R}^n})}$,  for each  $x \in {\mathbb{R}^n},F \in {\cal {D}}^T$.

By a solution, it is meant to be a stochastic process $(X_t)_{t\in [0,T ]} \in\bar{M}^2_G(0, T ; \mathbb{R}^n)$ fulfilling the distribution dependent  $G$-SDE (\ref{DDGSDEs}).

To ensure that (\ref{DDGSDEs}) is well-posed, primarily the integrands with respect to $d B_{s}^{j}$ should be in $M_G^2\left(0, T ; {\mathbb{R}^n}\right)$ and  the integrands with respect to $d\langle B\rangle _{s}^{i j}$ or $ds $ should be in $M_G^1\left(0, T ; {\mathbb{R}^n}\right)$.
Thus, we need the following lemma.
\begin{lem}\label{app1}
	Fix any $p\ge 1$, let $\zeta: [0, T] \times {\mathbb{R}^n} \times {\cal {D}} \rightarrow {\mathbb{R}^n} $ be a functional such that $\zeta ( \cdot ,x,{{\mathbb{F}}^x}) \in L^p(0,T;{{\mathbb{R}}^n})$ for each $x \in {{\mathbb{R}}^n}$. If $\zeta$ satisfies the Lipschitz condition in the sense that for all $t \in[0, T]$, $x,y \in {{\mathbb{R}}^n}$, ${f^1},{f^2} \in {\cal D}$,
	\begin{equation}\label{lip}
		\left| {\zeta (t,x,{f^1}) - \zeta (t ,y,{f^2})} \right| \le K\left( {\left| {x - y} \right| + {d_1}\left( {{f^1},{f^2}} \right)} \right),
	\end{equation}
	then, we claim that $\zeta ( \cdot ,{X_ \cdot },{\mathbb{F}}_ \cdot ^X)$ is an element in $M_G^p(0,T;{{\mathbb{R}}^n})$ for any $X_ \cdot  \in  \bar{M}_G^p(0,T;{{\mathbb{R}}^n})$.
\end{lem}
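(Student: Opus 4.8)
The plan is to first establish a Lipschitz-type a priori estimate for the map $X\mapsto\zeta(\cdot,X_\cdot,{\mathbb{F}}^X_\cdot)$, then verify the assertion when $X$ is a simple process, and finally pass to the general case by density of $M_G^{p,0}(0,T;{\mathbb{R}}^n)$ in $\bar M_G^p(0,T;{\mathbb{R}}^n)$ together with completeness of $M_G^p(0,T;{\mathbb{R}}^n)$. To that end I first record two observations. First, (\ref{lip}) together with the hypothesis $\zeta(\cdot,x,{\mathbb{F}}^x)\in L^p(0,T;{\mathbb{R}}^n)$ forces $\zeta(\cdot,x,f)\in L^p(0,T;{\mathbb{R}}^n)$ for \emph{every} $x\in{\mathbb{R}}^n$ and $f\in{\cal D}$, because $|\zeta(t,x,f)-\zeta(t,x,{\mathbb{F}}^x)|\le K\,d_1(f,{\mathbb{F}}^x)$ does not depend on $t$ and $d_1$ is finite on ${\cal D}$; moreover, for any $[a,b]\subset[0,T]$, the map $x\mapsto\zeta(\cdot,x,f)$ is Lipschitz from ${\mathbb{R}}^n$ into $L^p(a,b;{\mathbb{R}}^n)$ with constant $K(b-a)^{1/p}$. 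Second, for $Y,Z\in\bar M_G^p(0,T;{\mathbb{R}}^n)$ and every $t$ one has $d_1({\mathbb{F}}^Y_t,{\mathbb{F}}^Z_t)\le\hat{\mathbb{E}}[|Y_t-Z_t|]$, since $|\hat{\mathbb{E}}[\varphi(Y_t)]-\hat{\mathbb{E}}[\varphi(Z_t)]|\le\hat{\mathbb{E}}[|\varphi(Y_t)-\varphi(Z_t)|]\le C_\varphi\hat{\mathbb{E}}[|Y_t-Z_t|]$.

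Combining the second observation with (\ref{lip}), using $|u+c|^p\le2^{p-1}(|u|^p+c^p)$ and $(\hat{\mathbb{E}}[|Y_t-Z_t|])^p\le\hat{\mathbb{E}}[|Y_t-Z_t|^p]$, integrating in $t$, and invoking sub-additivity of $\hat{\mathbb{E}}$ (so that $\hat{\mathbb{E}}$ of a time integral is dominated by the time integral of $\hat{\mathbb{E}}$, i.e.\ $\bar M_G^p\subset M_G^p$), I obtain the a priori estimate
\begin{equation}\label{apriori}
\big\|\zeta(\cdot,Y_\cdot,{\mathbb{F}}^Y_\cdot)-\zeta(\cdot,Z_\cdot,{\mathbb{F}}^Z_\cdot)\big\|_{M_G^p(0,T;{\mathbb{R}}^n)}\le2K\,\|Y-Z\|_{\bar M_G^p(0,T;{\mathbb{R}}^n)},\qquad Y,Z\in\bar M_G^p(0,T;{\mathbb{R}}^n).
\end{equation}
Taking $Z\equiv{\bf 0}$, so that $\zeta(t,Z_t,{\mathbb{F}}^Z_t)=\zeta(t,{\bf 0},{\mathbb{F}}^{\bf 0})\in L^p(0,T;{\mathbb{R}}^n)$, also shows that $\zeta(\cdot,Y_\cdot,{\mathbb{F}}^Y_\cdot)$ has finite $M_G^p$-norm for every $Y\in\bar M_G^p(0,T;{\mathbb{R}}^n)$.

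Next I would prove the claim for a simple $\eta=\sum_{k=0}^{N-1}\xi_k{\bf 1}_{[t_k,t_{k+1})}\in M_G^{p,0}(0,T;{\mathbb{R}}^n)$. On $[t_k,t_{k+1})$ we have $\eta_t=\xi_k$ and ${\mathbb{F}}^\eta_t={\mathbb{F}}_{\xi_k}$ is constant in $t$, so $\zeta(t,\eta_t,{\mathbb{F}}^\eta_t)=\sum_{k=0}^{N-1}\zeta(t,\xi_k,{\mathbb{F}}_{\xi_k}){\bf 1}_{[t_k,t_{k+1})}(t)$, and it suffices to show each term $\zeta(\cdot,\xi_k,{\mathbb{F}}_{\xi_k}){\bf 1}_{[t_k,t_{k+1})}$ lies in $M_G^p(0,T;{\mathbb{R}}^n)$. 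Fix $k$ and write $\xi=\xi_k$, $\mu={\mathbb{F}}_{\xi_k}\in{\cal D}$, $[a,b]=[t_k,t_{k+1}]$; since $\xi=\Phi(B_{r_1},\dots,B_{r_j})$ with $\Phi\in C_{b.Lip}$, $\xi$ takes values in some ball $\overline{B}_R=\{|x|\le R\}$. For the uniform partition $s_l=a+l(b-a)/L$ of $[a,b]$ put
\[\eta^L_t:=\sum_{l=0}^{L-1}\Big(\tfrac{L}{b-a}\int_{s_l}^{s_{l+1}}\zeta(r,\xi,\mu)\,dr\Big){\bf 1}_{[s_l,s_{l+1})}(t)\ \ (t\in[a,b)),\qquad\eta^L_t:=0\ \ (t\notin[a,b)),\]
that is, $\eta^L_\cdot(\omega)=T_L\big(\zeta(\cdot,\xi(\omega),\mu)\big)$ on $[a,b)$, where $T_L$ sends an $L^p(a,b;{\mathbb{R}}^n)$-function to its mean over each $[s_l,s_{l+1})$. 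Using that $\zeta(r,\cdot,\mu)$ is $K$-Lipschitz uniformly in $r$ and that $\sup_{|x|\le R}|\zeta(\cdot,x,\mu)|\le|\zeta(\cdot,{\bf 0},\mu)|+KR\in L^1(a,b)$, one checks that each coefficient of $\eta^L$ equals $\Psi_l(B_{r_1},\dots,B_{r_j})$ for some $\Psi_l\in C_{b.Lip}$, so $\eta^L\in M_G^{p,0}(0,T;{\mathbb{R}}^n)$. Now $T_L$ is a contraction on $L^p(a,b;{\mathbb{R}}^n)$ converging strongly to the identity, and by the first observation ${\cal K}:=\{\zeta(\cdot,x,\mu):|x|\le R\}$ is a compact subset of $L^p(a,b;{\mathbb{R}}^n)$; hence $T_L$ converges uniformly on ${\cal K}$, i.e.\ $\varepsilon_L:=\sup_{|x|\le R}\|T_L\zeta(\cdot,x,\mu)-\zeta(\cdot,x,\mu)\|^p_{L^p(a,b)}\to0$. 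Since $\zeta(\cdot,\xi(\omega),\mu)\in{\cal K}$ for every $\omega$, we get $\int_a^b|\eta^L_t(\omega)-\eta^{L'}_t(\omega)|^p\,dt\le(\varepsilon_L^{1/p}+\varepsilon_{L'}^{1/p})^p$, so $\{\eta^L\}$ is Cauchy in $M_G^p(0,T;{\mathbb{R}}^n)$ and converges there; since moreover $\eta^L_t\to\zeta(t,\xi,\mu)$ pathwise for a.e.\ $t\in[a,b)$, that limit is $\zeta(\cdot,\xi,\mu){\bf 1}_{[a,b)}$, which is therefore an element of $M_G^p(0,T;{\mathbb{R}}^n)$. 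This finishes the simple case.

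Finally, given $X\in\bar M_G^p(0,T;{\mathbb{R}}^n)$, I would pick simple $\eta^m\in M_G^{p,0}(0,T;{\mathbb{R}}^n)$ with $\|\eta^m-X\|_{\bar M_G^p}\to0$. By the simple case each $\zeta(\cdot,\eta^m_\cdot,{\mathbb{F}}^{\eta^m}_\cdot)\in M_G^p(0,T;{\mathbb{R}}^n)$; by (\ref{apriori}) the sequence is Cauchy in $M_G^p(0,T;{\mathbb{R}}^n)$, hence converges there to some $U$; and (\ref{apriori}) applied to $(Y,Z)=(X,\eta^m)$ gives $\zeta(\cdot,\eta^m_\cdot,{\mathbb{F}}^{\eta^m}_\cdot)\to\zeta(\cdot,X_\cdot,{\mathbb{F}}^X_\cdot)$ in $M_G^p(0,T;{\mathbb{R}}^n)$, so $\zeta(\cdot,X_\cdot,{\mathbb{F}}^X_\cdot)=U\in M_G^p(0,T;{\mathbb{R}}^n)$, which is the assertion. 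The only genuinely delicate point is the simple-process case: because $\zeta$ is only assumed $L^p$ — not continuous — in the time variable, the time discretisations $\eta^L$ carry no a priori pathwise rate of convergence; the remedy is that a simple integrand ranges over a bounded set, so that the family of time profiles $\{\zeta(\cdot,x,\mu):|x|\le R\}$ is relatively compact in $L^p$, and compactness upgrades the (merely strong) convergence of the averaging operators $T_L$ to uniform convergence, which is precisely what forces the sublinear norm of $\eta^L-\zeta(\cdot,\xi,\mu){\bf 1}_{[a,b)}$ to zero.
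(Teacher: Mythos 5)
Your proposal is correct, and its outer skeleton coincides with the paper's: the stability estimate $\|\zeta(\cdot,Y_\cdot,\mathbb{F}^Y_\cdot)-\zeta(\cdot,Z_\cdot,\mathbb{F}^Z_\cdot)\|_{M_G^p}\le 2K\|Y-Z\|_{\bar M_G^p}$, obtained exactly as in the paper from $d_1(\mathbb{F}^Y_t,\mathbb{F}^Z_t)\le\hat{\mathbb{E}}[|Y_t-Z_t|]$, the reduction to a single block $\zeta(\cdot,\xi,\mathbb{F}^{\xi})\mathbf{1}_{[t_k,t_{k+1})}$ with $\xi\in Lip(\Omega_{t_k})$ bounded, and the final passage by density of simple processes and completeness of $M_G^p$. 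Where you genuinely diverge is the treatment of that single block. The paper (following Bai and Lin) discretises the \emph{space} variable: it covers the bounded range of $\xi$ by small sets, forms the partition-of-unity approximants $\sum_i\zeta(\cdot,x_i^n,\mathbb{F}^{x_i^n})\phi_i^n(\xi)$, and cites Lemma 5.2 of Bai and Lin for each term. You discretise the \emph{time} variable: the block averages $T_L$ of $t\mapsto\zeta(t,\xi(\omega),\mathbb{F}^{\xi})$ are visibly simple processes with $C_{b.Lip}$ coefficients, and the error is controlled uniformly in $\omega$ because $x\mapsto\zeta(\cdot,x,\mathbb{F}^{\xi})$ is Lipschitz into $L^p(t_k,t_{k+1})$, so the family of time profiles over the range of $\xi$ is compact and the contractions $T_L$ converge to the identity uniformly on it. The paper's route is shorter but leans on an external lemma; yours is self-contained (it essentially re-proves the content of that lemma) at the price of the compactness/uniform-convergence argument. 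Your route also buys something substantive: you keep the fixed functional $\mathbb{F}^{\xi}$ in the third slot throughout, whereas the paper replaces it by the point masses $\mathbb{F}^{x_i^n}$, so its error contains $d_1(\mathbb{F}^{\eta},\mathbb{F}^{x_i^n})\le\hat{\mathbb{E}}[|\eta-x_i^n|]$, a quantity that, unlike $|\eta-x_i^n|\phi_i^n(\eta)$, is not of order $1/n$; the paper's claimed uniform bound $2K/n$ therefore needs a repair of exactly the kind your formulation provides (freeze the third argument and only discretise $x$, or avoid space discretisation altogether as you do). The only caveat, shared equally with the paper, is foundational: applying $\hat{\mathbb{E}}$ (i.e.\ the upper expectation) to $\int_0^T|\zeta(t,X_t,\mathbb{F}^X_t)-\cdots|^p\,dt$ before membership in $M_G^p$ is established, which is standard in the Denis--Hu--Peng framework and not a gap specific to your argument.
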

\begin{proof}
The proof is motivated by Lemma 5.1 in Bai and Lin \cite{bai}. For readers convenience, we present our proof here.
	Without loss of generality, we only give the proof for the one dimensional case. For $X_ \cdot  \in  \bar{M}_G^p(0,T)$, choose ${\{ {X^N_ \cdot }\} _{N=1}^{\infty}} \subset M_G^{p,0}(0,T)$ such that ${\left\| {X_ \cdot ^N - {X_ \cdot }} \right\|_{ \bar{M}_G^p(0,T)}} \to 0 $ as $N \to \infty$, where $X^N_\cdot$ has the following simple form
	\[X_t^N = \sum\limits_{k = 0}^{N - 1} {{\xi _k}{I_{[{t_k},{t_{k + 1}})}}(t)}, \quad {\xi _k} \in {Lip}\left( {{\Omega _{{t_k}}}} \right).\]
Then, by the Lipschitz condition (\ref{lip}) and H\"{o}lder inequality of $\hat{\mathbb{E}}$ (Proposition 1.4.2 in Peng \cite{peng2}), we get
    \[\begin{aligned}&
	\hat {\mathbb{E}}\left[ {\int_0^T {{{\left| {\zeta (t,X_t^N,{\mathbb{F}}_t^{{X^N}}) - \zeta (t,{X_t},{\mathbb{F}}_t^X)} \right|}^p}dt} } \right]\\&
	\le 2^{p-1}{K^p}\hat {\mathbb{E}}\left[ {\int_0^T {\left( {{{\left| {X_t^N - {X_t}} \right|}^p} + {{\left[ {{d_1}\left( {{\mathbb{F}}_t^{{X^N}},{\mathbb{F}}_t^X} \right)} \right]}^p}} \right)dt} } \right]\\&
	= 2^{p-1}{K^p}\hat {\mathbb{E}}\left[ {\int_0^T {\left( {{{\left| {X_t^N - {X_t}} \right|}^p} +\mathop {\sup }\limits_{
			\scriptstyle{C_\varphi } \le {1}\hfill}\left|\hat{\mathbb{E}}[\varphi(X_t^{N})]-\hat{\mathbb{E}}[\varphi(X_t)]\right|^p }\right)dt} }\right]\\
&\le {(2K)^p}{\int_0^T \hat{\mathbb{E}}\left[ {{{\left| {X_t^N - {X_t}} \right|}^p}}  \right]dt}  \to 0,\quad {\rm{as}}~N \to \infty .
    \end{aligned}\]
	Hence, it is suffices to prove that $\zeta ( \cdot ,X_ \cdot ^N,\mathbb{F}_ \cdot ^{{X^N}}) \in M_G^p(0,T)$, that is, $\zeta ( \cdot ,{\xi _k},{\mathbb{F}^{{\xi _k}}}){I_{[{t_k},{t_{k + 1}})}}( \cdot ) \in M_G^p(0,T)$ for each $k \in \mathbb{N}$. For simplicity of the notation, we want to make a new assertion which  is equivalent to the one stated above: for fixed $T \ge 1$, if $\eta  \in {Lip}\left( {{\Omega _1}} \right)$, then $\zeta ( \cdot ,\eta ,{\mathbb{F}^\eta }){I_{[1,T)}}( \cdot ) \in M_G^p(0,T)$. Before we proceed further, let us prove the assertion.\par
	Since $\eta  \in {Lip}\left( {{\Omega _1}} \right)$, there exists an $M>0$ such that $\eta  \in \left[ { - M,M} \right]$. For each $n \in \mathbb{N}$, there is an open cover ${\left\{ {{G_i}} \right\}_{i \in I}}$ of $\mathbb{R}$ with the Lebesgue measure $\lambda(G_i)<\frac{1}{n}$ for each $i\in I$. By the partition of unity theorem, there exists a family $\{\phi_i^n\}_{i\in I}$ of $C_0^{\infty} (\mathbb{R})$-valued functions  such that
for each $i \in I$, supp$\left( {\phi _i^n} \right) \in {G_i}$, $0 \le \phi _i^n \le 1$, and for each $x \in \mathbb{R}$, $\sum\limits_{i \in I} {\phi _i^n\left( x \right)}  = 1$. Moreover, there exists a finite sub-family of $\{\phi _i^n\}_{i\in I}$, denoted by $\{\phi _i^n\}_{1\le i\le N(n)}$, such that for $x \in \left[ { - M,M} \right]$, $\sum\limits_{i = 1}^{N(n)} {\phi _i^n\left( x \right)}  = 1$. Choosing, for each $i = 1, \cdots, N(n)$, a point $x_i^n$ such that $\phi _i^n\left( {x_i^n} \right) > 0$.  Then let
	\[{\zeta ^n}\left( {t,x,{{\mathbb{F}}^x}} \right) := \sum\limits_{i = 1}^{N(n)} {\zeta \left( {t,x_i^n,{{\mathbb{F}}^{x_i^n}}} \right)} \phi _i^n\left( x \right).\]
	We have
	\[\begin{aligned}&
		\left|  \zeta \left( {t,\eta ,{{\mathbb{F}}^\eta }} \right){I_{[1,T)}}(t)-{\zeta ^n}\left( {t,\eta ,{{\mathbb{F}}^\eta }} \right){I_{[1,T)}}(t) \right|\\
&\le \sum\limits_{i = 1}^{N(n)} {\left|   {\zeta \left( {t,\eta ,{{\mathbb{F}}^\eta }} \right)}-\zeta \left( {t,x_i^n,{{\mathbb{F}}^{x_i^n}}} \right)\right|}\phi _i^n\left( \eta  \right)\\&
		\le K\sum\limits_{i = 1}^{N(n)} {\left( {\left| {\eta  - x_i^n} \right| + {d_1}\left( {{F^\eta },{F^{x_i^n}}} \right)} \right)} \phi _i^n\left( \eta  \right)\\&
\le K\sum\limits_{i = 1}^{N(n)} {\left( {\left| {\eta  - x_i^n} \right| +  \mathop {\sup }\limits_{
			\scriptstyle{C_\varphi } \le {1}\hfill}\left|\hat{\mathbb{E}}[\varphi(\eta)]-\varphi(x_i^n) \right|}    \right)} \phi _i^n\left( \eta  \right)\\&
		\le K\sum\limits_{i = 1}^{N(n)} {\left( {\left| {\eta  - x_i^n} \right| + \hat {\mathbb{E}}\left[ {\left| {\eta  - x_i^n} \right|} \right]} \right)} \phi _i^n\left( \eta  \right) \\&\le \frac{2K}{n},\quad 1 \le t < T,
	\end{aligned}\]
	which implies that ${\zeta ^n}\left( { \cdot ,\eta ,{F^\eta }} \right){I_{[1,T)}}( \cdot )$ converges to $\zeta \left( { \cdot ,\eta ,{F^\eta }} \right){I_{[1,T)}}( \cdot )$ in $M_G^p\left( {0,T} \right)$. Therefore, it suffices to prove that ${\zeta ^n}\left( { \cdot ,\eta ,{F^\eta }} \right){I_{[1,T)}}( \cdot )$ belongs to $M_G^p\left( {0,T} \right)$, that is, $\zeta \left( { \cdot ,x_i^n,{{\mathbb{F}}^{x_i^n}}} \right)\phi _i^n\left( \eta  \right){I_{[1,T)}}( \cdot ) \in M_G^p\left( {0,T} \right)$, $i = 1, \cdots, N(n)$, which can be deduced  from Lemma 5.2 in Bai and Lin \cite{bai}. The proof of Lemma \ref{app1} is thus completed.
\end{proof}

\begin{remark} When $p = 2$, all the coefficients in the distribution dependent $G$-SDE (\ref{DDGSDEs}) satisfy the conditions of Lemma \ref{app1}
under the assumptions (H1) and (H2). Therefore, the $G$-stochastic integrals in the $G$-SDE (\ref{DDGSDEs}) are well defined for any solution $ \left(X_{t}\right)_{t \in[0, T]} \in  \bar M_G^2\left(0, T ; {\mathbb{R}^n}\right)$.
\end{remark}

Our main result can be then formulated as follows.
	\begin{thm}\label{solution}
		Suppose the assumptions (H1) and (H2) hold. Then there exists a unique solution $ \left(X_{t}\right)_{t \in[0, T]} \in  \bar M_G^2\left(0, T ; {\mathbb{R}^n}\right)$ to the distribution dependent $G$-SDE (\ref{DDGSDEs}).
	\end{thm}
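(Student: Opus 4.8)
The plan is to prove Theorem~\ref{solution} by a Picard-type iteration combined with the fixed-point framework provided by the complete metric space $(\mathcal{D}^T, d_1^T)$ established in Proposition~\ref{complete}. First I would set up the iteration at the level of distributions: starting from the constant initial distribution $\mathbb{F}^{X^0}_t \equiv \mathbb{F}_{x_0}$, I would use the classical existence-and-uniqueness theorem for (non-distribution-dependent) $G$-SDEs with Lipschitz coefficients (Gao~\cite{gao}, Peng~\cite{peng2}) to solve, for each $n\ge 1$, the frozen equation
\[
X^{n}_{t}=x_0+\int_{0}^{t} b\left(s, X^{n}_{s}, {\mathbb{F}}_{s}^{X^{n-1}}\right) d s+\int_{0}^{t} h_{i j}\left(s, X^{n}_{s}, {\mathbb{F}}_{s}^{X^{n-1}}\right) d\langle B\rangle _{s}^{i j}+\int_{0}^{t} \sigma_{j}\left(s, X^{n}_{s}, {\mathbb{F}}_{s}^{X^{n-1}}\right) d B_{s}^{j},
\]
which is legitimate because Lemma~\ref{app1} (with $p=2$) guarantees that the coefficients $s\mapsto b(s,\cdot,\mathbb{F}^{X^{n-1}}_s)$ etc.\ land in the right $M_G$ spaces, and because $x\mapsto b(t,x,f)$ is Lipschitz uniformly in $(t,f)$ by (H1). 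This produces a sequence $(X^n)\subset \bar M_G^2(0,T;\mathbb{R}^n)$ and a corresponding sequence of distribution processes $(\mathbb{F}^{X^n})\subset \mathcal{D}^T$.

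The heart of the argument is the contraction estimate. Using the BDG-type bounds of Lemmas~\ref{inequality2} and \ref{inequality3} together with the It\^o-isometry estimate of Lemma~\ref{inequality1}, the sub-additivity of $\hat{\mathbb{E}}$, and the Lipschitz condition (H1), I would derive, for $t\in[0,T]$,
\[
\hat{\mathbb{E}}\left[\left|X^{n+1}_t - X^{n}_t\right|^2\right] \le C\int_0^t \hat{\mathbb{E}}\left[\left|X^{n+1}_s - X^{n}_s\right|^2\right]ds + C\int_0^t \left[d_1\left(\mathbb{F}^{X^{n}}_s,\mathbb{F}^{X^{n-1}}_s\right)\right]^2 ds,
\]
where $C$ depends only on $K$, $T$, $d$ and the $G$-function bounds. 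Here the key observation — essentially the remark following Proposition~\ref{space} — is that $d_1(\mathbb{F}^{X}_s,\mathbb{F}^{Y}_s)=\sup_{C_\varphi\le 1}|\hat{\mathbb{E}}[\varphi(X_s)]-\hat{\mathbb{E}}[\varphi(Y_s)]|\le \hat{\mathbb{E}}[|X_s-Y_s|]\le (\hat{\mathbb{E}}[|X_s-Y_s|^2])^{1/2}$, so the distributional distance is controlled pointwise in time by the $L^2_G$ distance of the processes. Applying Gronwall's inequality and then iterating the resulting recursion in $n$ (or, more cleanly, working with the weighted norm $\sup_t e^{-\beta t}\hat{\mathbb{E}}[|X^{n+1}_t-X^n_t|^2]$ and choosing $\beta$ large), I would obtain
\[
\sup_{t\in[0,T]} \hat{\mathbb{E}}\left[\left|X^{n+1}_t - X^{n}_t\right|^2\right] \le \frac{(CT)^n}{n!}\sup_{t\in[0,T]}\hat{\mathbb{E}}\left[\left|X^{1}_t - X^{0}_t\right|^2\right],
\]
which shows $(X^n)$ is Cauchy in $\bar M_G^2(0,T;\mathbb{R}^n)$, hence converges to some $X\in\bar M_G^2(0,T;\mathbb{R}^n)$, and simultaneously, by the same pointwise domination, $(\mathbb{F}^{X^n})$ is Cauchy in $(\mathcal{D}^T,d_1^T)$, hence converges to some $F\in\mathcal{D}^T$ by Proposition~\ref{complete}. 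One then checks $F=\mathbb{F}^X$ by continuity of $\varphi\mapsto\hat{\mathbb{E}}[\varphi(\cdot)]$ under $L^1_G$-convergence.

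To finish, I would pass to the limit in the iteration equation: the Lipschitz assumption (H1) plus Lemmas~\ref{inequality1}--\ref{inequality3} ensure that each integral term converges (in $L^2_G$, or at least after extracting the relevant limits) to the corresponding term with $X^n$ replaced by $X$ and $\mathbb{F}^{X^{n-1}}$ replaced by $\mathbb{F}^X$, so $X$ solves \eqref{DDGSDEs}. Uniqueness follows from the same contraction estimate: if $X,\tilde X$ are two solutions, then
\[
\hat{\mathbb{E}}\left[\left|X_t-\tilde X_t\right|^2\right]\le C\int_0^t \hat{\mathbb{E}}\left[\left|X_s-\tilde X_s\right|^2\right]ds + C\int_0^t \hat{\mathbb{E}}\left[\left|X_s-\tilde X_s\right|^2\right]ds,
\]
using $d_1(\mathbb{F}^X_s,\mathbb{F}^{\tilde X}_s)^2\le \hat{\mathbb{E}}[|X_s-\tilde X_s|^2]$ on the distributional term, and Gronwall's inequality forces $X=\tilde X$ in $\bar M_G^2(0,T;\mathbb{R}^n)$. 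The main obstacle I anticipate is technical rather than conceptual: one must be careful that the frozen coefficients at each Picard step genuinely satisfy the hypotheses of the classical $G$-SDE existence theorem (this is exactly what Lemma~\ref{app1} is for) and that the distributional term $d_1(\mathbb{F}^{X^n}_s,\mathbb{F}^{X^{n-1}}_s)$ is handled by the pointwise-in-time domination rather than any process-level norm, since $d_1^T$ involves a supremum over $t$ that does not by itself give a contraction without the Gronwall/factorial gain. A secondary subtlety is ensuring measurability/integrability in $s$ of $s\mapsto d_1(\mathbb{F}^{X^n}_s,\mathbb{F}^{X^{n-1}}_s)$, which follows from the continuity properties built into $\mathcal{D}^T$ and (H2).
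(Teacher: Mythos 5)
Your proposal is correct and rests on the same two pillars as the paper's argument --- the frozen equation (\ref{GSDEs}) solved by the classical Lipschitz theory for $G$-SDEs (with Lemma \ref{app1} justifying that the integrands lie in the right $M_G$ spaces), and the key domination $d_1(\mathbb{F}^{X}_s,\mathbb{F}^{Y}_s)\le\hat{\mathbb{E}}[|X_s-Y_s|]$, which converts the distributional distance into an $L^2_G$ distance and produces the Gronwall/factorial gain --- but you run the fixed-point scheme at the level of the processes, whereas the paper runs it purely at the level of distributions. The paper packages your iteration as powers of the map $U(F)=\mathbb{F}^{X^F}$ on $(\mathcal{D}^T,d_1^T)$: Lemma \ref{uniqueness} and Lemma \ref{existence} are exactly your recursive estimate and its factorial iteration, completeness of $\mathcal{D}^T$ (Proposition \ref{complete}) gives a fixed point $F^*$, and the solution of (\ref{DDGSDEs}) is then obtained without any limit passage in the stochastic integrals, simply as the (classical) solution of the frozen equation with $F^*$ inserted, whose distribution is $U(F^*)=F^*$; uniqueness likewise splits into uniqueness of the fixed point plus uniqueness for the frozen $G$-SDE. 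Your route must instead verify explicitly that the three integrals converge along the Picard sequence and that $F=\mathbb{F}^X$, which is routine under (H1) and Lemmas \ref{inequality1}--\ref{inequality3}, and in exchange it delivers the solution directly as an $\bar M_G^2$ limit and a one-line Gronwall uniqueness for processes. One small point you should make explicit, paralleling the paper's use of Lin's moment estimate to bound $d_1^T(U(F),F)$: the starting discrepancy $\sup_{t\in[0,T]}\hat{\mathbb{E}}[|X^1_t-X^0_t|^2]$ must be shown finite before the factorial recursion can be launched; with that noted, the proposal is sound.
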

	Our proof of Theorem \ref{solution} is inspired by \cite{mel} and \cite{szn} and is based on a fixed point theorem. One considers the mapping $U:{\cal {D}}^{T}\to{\cal {D}}^{T}$ which associates with $F$ the distribution of $X^{F}$, that is $U(F)={\mathbb{F}}^{{X^F}}$, where $X^F$ fulfils the following
	\begin{equation}\label{GSDEs}
		X_{t}^{F}=x_0+\int_{0}^{t} b\left(s, X_{s}^{F}, F_{s}\right) d s+\int_{0}^{t} h_{i j}\left(s, X_{s}^{F}, F_{s}\right) d\langle B\rangle_{s}^{i j}+\int_{0}^{t} \sigma_{j}\left(s, X_{s}^{F}, F_{s}\right) d B_{s}^{j},\ t \in[0, T].
	\end{equation}\par
    We need show the following three lemmas before we present our proof of Theorem \ref{solution}.
	\begin{lem}\label{defined}
		Assume that (H1) and (H2) hold. Then, the mapping $U(F)={\mathbb{F}}^{{X^F}}:{\cal {D}}^{T}\to{\cal {D}}^{T}$ is well-defined.
	\end{lem}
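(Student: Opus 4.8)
The plan is to split the claim into two parts: first, that for each fixed $F\in\mathcal{D}^{T}$ the auxiliary equation (\ref{GSDEs}) is an ordinary (not distribution dependent) $G$-SDE with Lipschitz coefficients and therefore has a unique solution $X^{F}\in\bar M_{G}^{2}(0,T;\mathbb{R}^{n})$; and second, that the functional process $\mathbb{F}^{X^{F}}$ attached to $X^{F}$ by Definition \ref{distribution2} actually lies in $\mathcal{D}^{T}$, which is exactly what is needed for $F\mapsto U(F)=\mathbb{F}^{X^{F}}$ to map $\mathcal{D}^{T}$ into itself.

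For the first part, fix $F$ and set $\tilde b(s,x):=b(s,x,F_{s})$, $\tilde h_{ij}(s,x):=h_{ij}(s,x,F_{s})$, $\tilde\sigma_{j}(s,x):=\sigma_{j}(s,x,F_{s})$. Since $F_{s}\in\mathcal{D}$ for every $s$, choosing $f^{1}=f^{2}=F_{s}$ in (H1) shows that $x\mapsto\tilde b(s,x)$, and likewise $\tilde h_{ij}$ and $\tilde\sigma_{j}$, are $K$-Lipschitz uniformly in $s\in[0,T]$, while (H2) says precisely that $\tilde b(\cdot,x),\tilde h_{ij}(\cdot,x),\tilde\sigma_{j}(\cdot,x)\in L^{2}(0,T;\mathbb{R}^{n})$ for every $x$. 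Moreover, Lemma \ref{app1} applied with $p=2$ and $\zeta(t,x,f):=\tilde b(t,x)$ — which trivially satisfies (\ref{lip}), its left-hand side being at most $K|x-y|\le K(|x-y|+d_{1}(f^{1},f^{2}))$ — and likewise for $\tilde h_{ij},\tilde\sigma_{j}$, guarantees that for any $X\in\bar M_{G}^{2}(0,T;\mathbb{R}^{n})$ one has $\tilde b(\cdot,X_{\cdot}),\tilde h_{ij}(\cdot,X_{\cdot})\in M_{G}^{2}(0,T;\mathbb{R}^{n})\subset M_{G}^{1}(0,T;\mathbb{R}^{n})$ and $\tilde\sigma_{j}(\cdot,X_{\cdot})\in M_{G}^{2}(0,T;\mathbb{R}^{n})$, so all the $G$-stochastic integrals in (\ref{GSDEs}) are meaningful. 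Existence and uniqueness of $X^{F}$ then follow from the standard theory of $G$-SDEs with Lipschitz coefficients (Peng \cite{peng2}, Gao \cite{gao}) through the contraction/Picard-iteration argument, and the a priori bound below will in addition place $X^{F}$ in $\bar M_{G}^{2}(0,T;\mathbb{R}^{n})$.

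For the second part, write $G_{t}(\varphi):=\mathbb{F}^{X^{F}}_{t}(\varphi)=\hat{\mathbb{E}}[\varphi(X^{F}_{t})]$ for $\varphi\in C_{Lip}(\mathbb{R}^{n})$. Because $(\mathbb{R}^{n},C_{Lip}(\mathbb{R}^{n}),\mathbb{F}_{X^{F}_{t}})$ is itself a sublinear expectation space (the remark following Definition \ref{distribution1}), each $G_{t}$ is automatically monotonic, constants preserving, sub-additive and positively homogeneous, so the only point to check is the finiteness condition in the definition of $\mathcal{D}^{T}$, namely $\sup_{t\in[0,T]}\sup_{C_{\varphi}\le 1}|G_{t}(\varphi)-\varphi(\mathbf{0})|<\infty$. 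By the identity $\sup_{C_{\varphi}\le 1}|f(\varphi)-\varphi(\mathbf{0})|=\hat{\mathbb{E}}[|X|]$ valid whenever $f$ is the distribution of $X$ (see the remarks after Proposition \ref{space}), the inner supremum equals $\hat{\mathbb{E}}[|X^{F}_{t}|]$, so the assertion reduces to $\sup_{t\in[0,T]}\hat{\mathbb{E}}[|X^{F}_{t}|]<\infty$, and it is enough to bound $\sup_{t\in[0,T]}\hat{\mathbb{E}}[|X^{F}_{t}|^{2}]$.

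This a priori moment bound is the main technical step; the rest is bookkeeping with the definitions and the lemmas of Sections 2 and 3. From (\ref{GSDEs}) I would estimate $\hat{\mathbb{E}}[|X^{F}_{t}|^{2}]$ by a constant multiple of $|x_{0}|^{2}$ plus the squared $L^{2}$-type contributions of the four integral terms, controlling the $dB^{j}$ and $d\langle B\rangle^{ij}$ terms by Lemmas \ref{inequality1}, \ref{inequality2} and \ref{inequality3}, the $ds$ term by Hölder's inequality, and using (H1) and (H2) in the form $|\tilde b(s,X_{s})|^{2}\le C(1+|X_{s}|^{2}+|\tilde b(s,\mathbf{0})|^{2})$ (similarly for $\tilde h_{ij},\tilde\sigma_{j}$), with $\tilde b(\cdot,\mathbf{0}),\tilde h_{ij}(\cdot,\mathbf{0}),\tilde\sigma_{j}(\cdot,\mathbf{0})\in L^{2}(0,T)$. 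This produces an inequality of the form
\[
\hat{\mathbb{E}}\bigl[|X^{F}_{t}|^{2}\bigr]\le C_{1}+C_{2}\int_{0}^{t}\hat{\mathbb{E}}\bigl[|X^{F}_{s}|^{2}\bigr]\,ds,\qquad t\in[0,T],
\]
with $C_{1},C_{2}$ depending only on $K$, $T$, $|x_{0}|$ and the $L^{2}(0,T)$-norms furnished by (H2), and Gronwall's lemma then yields $\sup_{t\in[0,T]}\hat{\mathbb{E}}[|X^{F}_{t}|^{2}]<\infty$, hence $\sup_{t\in[0,T]}\hat{\mathbb{E}}[|X^{F}_{t}|]<\infty$ and $\int_{0}^{T}\hat{\mathbb{E}}[|X^{F}_{t}|^{2}]\,dt<\infty$. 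Therefore $X^{F}\in\bar M_{G}^{2}(0,T;\mathbb{R}^{n})$ and $\mathbb{F}^{X^{F}}\in\mathcal{D}^{T}$, so $U$ is a well-defined map from $\mathcal{D}^{T}$ to $\mathcal{D}^{T}$.
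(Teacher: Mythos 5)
Your proposal is correct and follows essentially the same route as the paper: freeze $F$, invoke the standard Lipschitz $G$-SDE theory for existence and uniqueness of $X^{F}\in\bar M_{G}^{2}(0,T;\mathbb{R}^{n})$, note that monotonicity, constants preserving, sub-additivity and positive homogeneity of $\mathbb{F}^{X^{F}}_{t}$ are automatic, and reduce membership in $\mathcal{D}^{T}$ to $\sup_{t\in[0,T]}\hat{\mathbb{E}}[|X^{F}_{t}|]<\infty$. The only difference is that the paper obtains this moment bound by citing Proposition 3.3 of Lin \cite{lin1}, whereas you sketch a self-contained derivation via Lemmas \ref{inequality1}--\ref{inequality3} and Gronwall (essentially the argument of Theorem \ref{estimate1}), which is equally valid.
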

	\begin{proof} Fixing $F \in {\cal {D}}^{T}$, the equation (\ref{GSDEs}) then becomes a SDE driven by $G$-Brownian motion. Thanks to the assumptions (H1) and (H2), it can be deduced from Theorem 5.1.3 in \cite{peng2} that there is a unique solution $\left(X_{t}^{F}\right)_{t \in[0, T]} \in \bar M_G^2\left(0, T ; {\mathbb{R}^n}\right)$ to the equation (\ref{GSDEs}). We then only need to prove that $U(F) \in {\cal {D}}^{T}$. In fact, it follows from the sub-additivity of $\hat {\mathbb{E}}$ that
	\[\begin{aligned}&
\mathop {\sup }\limits_{t \in [0,T]} \mathop {\sup }\limits_{{C_\varphi } \le 1} \left| U(F)(\varphi)- \varphi \left( \bf{0} \right)\right|\\&
		=\mathop {\sup }\limits_{t \in [0,T]} \mathop {\sup }\limits_{{C_\varphi } \le 1} \left| {\hat {\mathbb{E}}\left[ {\varphi \left( {X_t^F} \right)} \right] - \varphi \left( \bf{0} \right)} \right|\\&
		\le \mathop {\sup }\limits_{t \in [0,T]} \mathop {\sup }\limits_{{C_\varphi } \le 1} \hat {\mathbb{E}}\left[ {\left| {\varphi \left( {X_t^F} \right) - \varphi \left( \bf{0} \right)} \right|} \right]\\&
		\le \mathop {\sup }\limits_{t \in [0,T]} \hat {\mathbb{E}}\left[ {\left| {X_t^F} \right|} \right]\\&
        < \infty ,
	\end{aligned}\]
where the last inequality is due to the estimate for the solution of $G$-SDE (see Proposition 3.3 in Lin \cite{lin1}). Thus, the mapping $U:{\cal {D}}^{T}\to{\cal {D}}^{T}$ is well-defined. The proof of Lemma \ref{defined} is completed.
\end{proof}
	\begin{lem}\label{uniqueness}
		For $0 \le t \le T$ and ${F^1},{F^2} \in {\cal {D}}^{T}$, we have
		\[{\left[ {d_1^t\left( {U\left( {{F^1}} \right),U\left( {{F^2}} \right)} \right)} \right]^2} \le {C_T}{\int_0^t {\left[ {d_1^s\left( {{F^1},{F^2}} \right)} \right]} ^2}ds,\]
		where $C_T$ is a constant depending only on the constants $K$ and $T$.
	\end{lem}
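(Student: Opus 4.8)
The plan is to estimate $d_1^t(U(F^1),U(F^2)) = \sup_{s\le t}\sup_{C_\varphi\le 1}|\hat{\mathbb{E}}[\varphi(X_s^{F^1})] - \hat{\mathbb{E}}[\varphi(X_s^{F^2})]|$ by first reducing it to the more tractable quantity $\sup_{s\le t}\hat{\mathbb{E}}[|X_s^{F^1}-X_s^{F^2}|]$, and then running a Gronwall-type argument on the $G$-SDE \eqref{GSDEs}. For the first reduction, note that for $C_\varphi\le 1$ the sub-additivity of $\hat{\mathbb{E}}$ gives $|\hat{\mathbb{E}}[\varphi(X_s^{F^1})] - \hat{\mathbb{E}}[\varphi(X_s^{F^2})]| \le \hat{\mathbb{E}}[|\varphi(X_s^{F^1}) - \varphi(X_s^{F^2})|] \le \hat{\mathbb{E}}[|X_s^{F^1}-X_s^{F^2}|]$, so it suffices to bound $\sup_{s\le t}\hat{\mathbb{E}}[|X_s^{F^1}-X_s^{F^2}|^2]$ (and then apply $\hat{\mathbb{E}}[|\cdot|]\le (\hat{\mathbb{E}}[|\cdot|^2])^{1/2}$, squaring at the end). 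Actually it is cleaner to carry the second moment throughout: set $\Theta(t):=\sup_{s\le t}\hat{\mathbb{E}}[|X_s^{F^1}-X_s^{F^2}|^2]$, so that $[d_1^t(U(F^1),U(F^2))]^2\le \Theta(t)$, and the goal becomes $\Theta(t)\le C_T\int_0^t [d_1^s(F^1,F^2)]^2\,ds$.

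Next I would subtract the two copies of \eqref{GSDEs}, use the elementary inequality $|\sum_{k=1}^m a_k|^2 \le m\sum_{k=1}^m a_k^2$ to split into the $ds$-term, the $d\langle B\rangle^{ij}$-terms and the $dB^j$-terms, and estimate each block. For the $ds$ block I would use Hölder in $dt$; for the $d\langle B\rangle^{ij}$ blocks Lemma \ref{inequality3} with $p=2$; for the $dB^j$ blocks Lemma \ref{inequality2} with $p=2$. Each produces a term of the form (constant)$\cdot\int_0^s \hat{\mathbb{E}}[|b(r,X_r^{F^1},F_r^1) - b(r,X_r^{F^2},F_r^2)|^2]\,dr$ and similarly for $h_{ij}$, $\sigma_j$. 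Now apply the Lipschitz assumption (H1), which after using $(u+v)^2\le 2u^2+2v^2$ yields integrands bounded by $2K^2\hat{\mathbb{E}}[|X_r^{F^1}-X_r^{F^2}|^2] + 2K^2\hat{\mathbb{E}}[(d_1(F_r^1,F_r^2))^2]$. Taking the supremum over the endpoint and collecting constants (all depending only on $K$, $T$, $d$, $n$, the $\sigma_{\mathbf{a}\mathbf{a}^T}^2$ bounds, and $C_2$) gives
\[
\Theta(t) \le C_T\int_0^t \Theta(s)\,ds + C_T\int_0^t \big[d_1^s(F^1,F^2)\big]^2\,ds,
\]
where I have bounded $\hat{\mathbb{E}}[(d_1(F_r^1,F_r^2))^2] = (d_1(F_r^1,F_r^2))^2 = (d_1(F_r^1,F_r^2))^2 \le [d_1^r(F^1,F^2)]^2$ (the distance $d_1$ of two fixed functionals is a deterministic number, and $d_1(F_r^1,F_r^2)\le d_1^r(F^1,F^2)$ by definition of the sup-metric $d_1^r$). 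Finally, Gronwall's inequality applied to the nondecreasing function $\Theta$ gives $\Theta(t)\le C_T e^{C_T T}\int_0^t [d_1^s(F^1,F^2)]^2\,ds$, which is the claimed estimate after renaming the constant.

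The main technical point to be careful about — rather than a genuine obstacle — is the bookkeeping needed to pass from $\hat{\mathbb{E}}[|X_s^{F^1} - X_s^{F^2}|^2]$ at a fixed time $s$ to the running supremum $\Theta(s)$ inside the integral: the BDG-type Lemmas \ref{inequality2} and \ref{inequality3} are stated for $\hat{\mathbb{E}}[\sup_{s\le u\le t}|\cdot|^p]$, so one should take $\sup$ over the time variable on the left of the subtracted equation \emph{before} applying them, which is legitimate, and then note $\int_0^s \hat{\mathbb{E}}[|X_r^{F^1}-X_r^{F^2}|^2]\,dr \le \int_0^s \Theta(r)\,dr$. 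One must also confirm that all integrands lie in the appropriate spaces $M_G^2$ or $M_G^1$ so that the lemmas apply — but this is exactly what Lemma \ref{app1} (with $p=2$) and the preceding remark guarantee. Everything else is a routine constant-chasing exercise, and the only inequality with any subtlety is invoking (H1) together with the identity from Remark (that $d_1(\mathbb{F}_X,\mathbb{F}_Y)$-type quantities are deterministic) so that the $d_1$-term comes straight out of $\hat{\mathbb{E}}[\cdot]$ without loss.
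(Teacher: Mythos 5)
Your proposal is correct and follows essentially the same route as the paper: reduce $d_1^t(U(F^1),U(F^2))$ to $\sup_{s\le t}\hat{\mathbb{E}}[|X_s^1-X_s^2|^2]$ via sub-additivity and H\"older, subtract the two fixed-distribution $G$-SDEs, estimate the three blocks with H\"older and the Lemma \ref{inequality1}/\ref{inequality2}/\ref{inequality3}-type bounds, pull the deterministic $d_1(F_s^1,F_s^2)$ out via (H1), and close with Gronwall. The only cosmetic differences are that the paper uses Lemma \ref{inequality1} (no supremum needed, since the time-supremum sits outside $\hat{\mathbb{E}}$) where you invoke Lemma \ref{inequality2}, and it handles the Gronwall step by freezing an auxiliary horizon $T'$ rather than using the nondecreasing-forcing form, which is equivalent.
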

	\begin{proof}
		For $F^{1}, F^{2} \in {\cal {D}}_{T}$, we notice that $U({F^1})$ and $U({F^2})$ are the distributions of ${X^1}$, ${X^2}$ respectively, where ${X^1}$ and ${X^2}$ are determined by the following
		\[X_{t}^{1}=x_0+\int_{0}^{t} b\left(s, X_{s}^{1}, F_{s}^{1}\right) d s+\int_{0}^{t} h_{i j}\left(s, X_{s}^{1}, F_{s}^{1}\right) d\langle B\rangle_{s}^{i j}+\int_{0}^{t} \sigma_{j}\left(s, X_{s}^{1}, F_{s}^{1}\right) d B_{s}^{j},\]
		\[X_{t}^{2}=x_0+\int_{0}^{t} b\left(s, X_{s}^{2}, F_{s}^{2}\right) d s+\int_{0}^{t} h_{i j}\left(s, X_{s}^{2}, F_{s}^{2}\right) d\langle B\rangle_{s}^{i j}+\int_{0}^{t} \sigma_{j}\left(s, X_{s}^{2}, F_{s}^{2}\right) d B_{s}^{j}\]
		for $t \in[0, T]$. Then by H\"{o}lder inequality of $\hat{\mathbb{E}}$ (see Proposition 1.4.2 in Peng \cite{peng2}), we get
		\[ \begin{aligned}
			{\left[ {d_1^t\left( {U\left( {{F^1}} \right),U\left( {{F^2}} \right)} \right)} \right]^2} &= \mathop {\sup }\limits_{s \in [0,t]} \mathop {\sup }\limits_{
				\scriptstyle{C_\varphi } \le {1}\hfill} {\left| {\hat{\mathbb{E}}\left[ {\varphi \left( {X_s^1} \right)} \right] - \hat{\mathbb{E}}\left[ {\varphi \left( {X_s^2} \right)} \right]} \right|^2} \\
			&  \le \mathop {\sup }\limits_{s \in [0,t]} \mathop {\sup }\limits_{
				\scriptstyle{C_\varphi } \le {1}\hfill} \hat{\mathbb{E}}\left[ {{{\left| {\varphi \left( {X_s^1} \right) - \varphi \left( {X_s^2} \right)} \right|}^2}} \right] \\&
			\le  \mathop {\sup }\limits_{s \in [0,t]} \hat{\mathbb{E}}\left[ {{{\left| {X_s^1 - X_s^2} \right|}^2}} \right].
		\end{aligned}
		\]
	Meanwhile, for $0\le t\le T'\leq T$, it follows from Lemma \ref{inequality1} and Lemma \ref{inequality3} that
		\[\begin{aligned}&
			\mathop {\sup }\limits_{s \in [0,t]} \hat{\mathbb{E}}\left[ {{{\left| {X_s^1 - X_s^2} \right|}^2}} \right]\\&
			\le \mathop {\sup }\limits_{s \in [0,t]} C\hat{\mathbb{E}}\left[ {{{\left| {\int_0^s {\left( {b\left( {u,X_u^1,F_u^1} \right) - b\left( {u,X_u^2,F_u^2} \right)} \right)} du} \right|}^2}} \right]\\&
			\quad+ \mathop {\sup }\limits_{s \in [0,t]} C\hat{\mathbb{E}}\left[ {{{\left| {\int_0^s {\left( {{h_{ij}}\left( {u,X_u^1,F_u^1} \right) - {h_{ij}}\left( {u,X_u^2,F_u^2} \right)} \right)} d\left\langle B \right\rangle _u^{ij}} \right|}^2}} \right]\\&
			\quad+ \mathop {\sup }\limits_{s \in [0,t]} C\hat{\mathbb{E}}\left[ {{{\left| {\int_0^s {\left( {{\sigma _j}\left( {u,X_u^1,F_u^1} \right) - {\sigma _j}\left( {u,X_u^2,F_u^2} \right)} \right)} dB_u^j} \right|}^2}} \right]\\&
			\le C\int_0^t {\hat{\mathbb{E}}\left[ {{{\left| {b\left( {s,X_s^1,F_s^1} \right) - b\left( {s,X_s^2,F_s^2} \right)} \right|}^2}} \right]} ds\\&
			\quad + C\int_0^t {\hat{\mathbb{E}}\left[ {{{\left| {{h_{ij}}\left( {s,X_s^1,F_s^1} \right) - {h_{ij}}\left( {s,X_s^2,F_s^2} \right)} \right|}^2}} \right]} ds\\&
			\quad + C\int_0^t {\hat{\mathbb{E}}\left[ {{{\left| {{\sigma _j}\left( {s,X_s^1,F_s^1} \right) - {\sigma _j}\left( {s,X_s^2,F_s^2} \right)} \right|}^2}} \right]} ds\\&
			\le C\int_0^t {\hat{\mathbb{E}}\left[ {{{\left| {X_s^1 - X_s^2} \right|}^2}} \right]} ds + C\int_0^t {{{\left[ {d_1(F_s^1,F_s^2)} \right]}^2}} ds\\&
			\le C\int_0^t {\mathop {\sup }\limits_{s' \in [0,s]} } \hat{\mathbb{E}}\left[ {{{\left| {X_{s'}^1 - X_{s'}^2} \right|}^2}} \right]ds + C{\int_0^{T'} {\left[ {{d_1^s}({F^1},{F^2})} \right]} ^2}ds.
		\end{aligned}\]
Next by Gronwall inequality, we have
		\[\mathop {\sup }\limits_{s \in [0,t]} \hat{\mathbb{E}}\left[ {{{\left| {X_s^1 - X_s^2} \right|}^2}} \right] \le C{e^{Ct}}\int_0^{T'} {{{\left[ {d_1^s({F^1},{F^2})} \right]}^2}} ds, \text{ for } 0\le t\le T'.\]
		Let $t=T'$, we obtain the following
		\[\begin{aligned}
			{\left[ {d_1^t\left( {U\left( {{F^1}} \right),U\left( {{F^2}} \right)} \right)} \right]^2} &\le \mathop {\sup }\limits_{s \in [0,t]} \hat{\mathbb{E}}\left[ {{{\left| {X_s^1 - X_s^2} \right|}^2}} \right]\\&
			\le C{e^{Ct}}{\int_0^t {\left[ {{d_1^s}({F^1},{F^2})} \right]} ^2}ds\\&
			= {C_T}{\int_0^t {\left[ {{d_1^s}({F^1},{F^2})} \right]} ^2}ds,
		\end{aligned}\]
where $C_T=C{e^{CT}}$, $C$ is a constant depending only on $K$ and $T$, and may vary line by line. The proof of Lemma \ref{uniqueness} is completed.
\end{proof}
	\begin{lem}\label{existence}
		For $k \ge 1$, $F\in {\cal {D}}^T$, $0\le t\le T$, we have
		\begin{equation}\label{iterate}
			{\left[ {d_1^t\left( {{U^{(k + 1)}}\left( F \right),{U^{(k)}}\left( F \right)} \right)} \right]^2} \le {C_T}^{k}\frac{{{t^{k}}}}{{k!}}{\left[ {d_1^t\left( {U\left( F \right),F} \right)} \right]^2},
		\end{equation}
		where $C_T$ is the same constant as in Lemma \ref{uniqueness}.
	\end{lem}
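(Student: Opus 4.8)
The plan is to prove \eqref{iterate} by induction on $k$, using Lemma \ref{uniqueness} as the inductive engine together with one elementary observation: for any fixed pair of functional processes $G^1,G^2\in\mathcal{D}^T$, the map $s\mapsto d_1^s(G^1,G^2)$ is nondecreasing on $[0,T]$, since $d_1^s$ is a supremum taken over the enlarging index set $[0,s]\times\{C_\varphi\le 1\}$. Note also that $U^{(k)}(F)\in\mathcal{D}^T$ for every $k$ by iterating Lemma \ref{defined}, so all the quantities below make sense.

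For the base case $k=1$, I would apply Lemma \ref{uniqueness} with $F^1=U(F)$ and $F^2=F$, obtaining
\[
\left[d_1^t\left(U^{(2)}(F),U(F)\right)\right]^2\le C_T\int_0^t\left[d_1^s\left(U(F),F\right)\right]^2\,ds.
\]
By the monotonicity of $s\mapsto d_1^s(U(F),F)$, the integrand is at most $[d_1^t(U(F),F)]^2$, so the right-hand side is bounded by $C_T\,t\,[d_1^t(U(F),F)]^2$, which is exactly \eqref{iterate} for $k=1$.

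For the inductive step, assume \eqref{iterate} holds for some $k\ge 1$ and all $0\le t\le T$. Applying Lemma \ref{uniqueness} with $F^1=U^{(k+1)}(F)$ and $F^2=U^{(k)}(F)$ gives
\[
\left[d_1^t\left(U^{(k+2)}(F),U^{(k+1)}(F)\right)\right]^2\le C_T\int_0^t\left[d_1^s\left(U^{(k+1)}(F),U^{(k)}(F)\right)\right]^2\,ds.
\]
The induction hypothesis (used at the intermediate horizon $s\le t$) bounds the integrand by $C_T^{k}\frac{s^{k}}{k!}[d_1^s(U(F),F)]^2$, and monotonicity again replaces $[d_1^s(U(F),F)]^2$ by $[d_1^t(U(F),F)]^2$; since $\int_0^t s^k\,ds=t^{k+1}/(k+1)$, we arrive at
\[
\left[d_1^t\left(U^{(k+2)}(F),U^{(k+1)}(F)\right)\right]^2\le C_T^{k+1}\frac{t^{k+1}}{(k+1)!}\left[d_1^t\left(U(F),F\right)\right]^2,
\]
which closes the induction.

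There is no genuine obstacle here; the only points demanding a moment of care are (i) that Lemma \ref{uniqueness} is invoked at every sub-horizon $s\le T$, which is exactly how it is stated; (ii) the monotonicity of $d_1^s$ in $s$, which is what lets us bound the integral in the base case and extract the factor $[d_1^t(U(F),F)]^2$ out of the integral in the inductive step; and (iii) that the constant stays $C_T=Ce^{CT}$ and does not degrade into a horizon-dependent one, which holds because the constant in Lemma \ref{uniqueness} is already uniform over sub-horizons. This lemma, together with Lemmas \ref{defined} and \ref{uniqueness}, will be used to show that $\{U^{(k)}(F)\}_{k\ge1}$ is Cauchy in the complete metric space $(\mathcal{D}^T,d_1^T)$ of Proposition \ref{complete}; its limit is the unique fixed point of $U$, hence the distribution of a solution of \eqref{DDGSDEs}, with uniqueness following from Lemma \ref{uniqueness} via Gronwall's inequality.
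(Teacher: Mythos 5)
Your proof is correct and follows essentially the same route as the paper: induction on $k$, applying Lemma \ref{uniqueness} to the pair $U^{(k)}(F)$, $U^{(k-1)}(F)$ at each step and using the monotonicity of $s\mapsto d_1^s$ to pull $[d_1^t(U(F),F)]^2$ out of the integral. The paper merely phrases the inductive step as "similar to the proof of Lemma \ref{uniqueness}" rather than quoting that lemma verbatim, and leaves the monotonicity of $d_1^s$ implicit, which you have made explicit.
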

	\begin{proof}For any $F\in {\cal {D}}^{T}$, $k \ge 1$, let ${U^{(k)}}\left( F \right) = U\left( {{U^{(k - 1)}}\left( F \right)} \right)$ and
		${U^{(0)}}\left( F \right)= F$.
		Then ${{U^{(k )}}\left( F \right)}$ is the distribution of ${X^{(k )}}$, where $X^{(k)}$ for each $k\ge 1$ is defined by the following
		\[\begin{aligned}
			X_t^{(k)} &= x_0+ \int_0^t b \left( {s,X_s^{(k)},U_s^{(k - 1)}\left( F \right)} \right)ds + \int_0^t {{h_{ij}}} \left( {s,X_s^{(k)},U_s^{(k - 1)}\left( F \right)} \right)d\langle B\rangle _s^{ij}\\&
			\quad\ + \int_0^t {{\sigma _j}} \left( {s,X_s^{(k)},U_s^{(k - 1)}\left( F \right)} \right)dB_s^j,\quad t \in[0, T].
		\end{aligned}\]
By Lemma \ref{uniqueness}, we have
		\[\left[{d_1^t}\left( {{U^{(2)}}\left( F \right),U^{(1)}\left( F \right)} \right)\right]^2 \le {C_T}\int_{0}^{t}\left({d_1^s}\left( {U\left( F \right),F} \right)\right)^2 ds \le {C_T}t\left[{d_1^t}\left( {U\left( F \right),F}\right) \right]^2, \ 0\le t\le T.\]
		We next use mathematical induction argument to show the inequality (\ref{iterate}). Suppose the inequality (\ref{iterate}) holds for $k-1$, namely
		\[{\left[ {d_1^t\left( {{U^{(k)}}\left( F \right),{U^{(k - 1)}}\left( F \right)} \right)} \right]^2} \le {C_T}^{k-1}\frac{{{t^{k-1}}}}{{{(k-1)}!}}{\left[ {d_1^t\left( {U\left( F \right),F} \right)} \right]^2}, \ 0\le t\le T.\]
Similar to the proof of Lemma \ref{uniqueness}, we have
		\[\begin{aligned}&
			{\left[ {d_1^t\left( {{U^{(k + 1)}}\left( F \right),{U^{(k)}}\left( F \right)} \right)} \right]^2}\\&
			= \mathop {\sup }\limits_{s \in [0,t]} \mathop {\sup }\limits_{
				\scriptstyle{C_\varphi } \le {1}\hfill} {{\left| {\hat{\mathbb{E}}\left[ {\varphi \left( {X_s^{(k+1)}} \right)} \right] - \hat{\mathbb{E}}\left[ {\varphi \left( {X_s^{(k )}} \right)} \right]} \right|}^2}\\&
			\le \mathop {\sup }\limits_{s \in [0,t]} \hat{\mathbb{E}}\left[ {{{\left| {X_s^{(k + 1)} - X_s^{(k)}} \right|}^2}} \right]\\&
			\le {C_T}\int_0^t {{{\left[ {d_1^s\left( {{U^{(k)}}\left( F \right),{U^{(k - 1)}}\left( F \right)} \right)} \right]}^2}} ds\\&
            \le {C_T}\int_0^t {C_T^{k{\rm{ - }}1}\frac{{{s^{k{\rm{ - }}1}}}}{{(k - 1)!}}{{\left[ {d_1^s(U\left( F \right),F)} \right]}^2}} ds\\&
            \le C_T^k\frac{{{t^k}}}{{k!}}{\left[ {d_1^t\left( {U\left( F \right),F} \right)} \right]^2}.
		\end{aligned}\]
The proof of Lemma \ref{existence} is completed.
\end{proof}
       \noindent {\bf \it Proof of Theorem \ref{solution}}.
		First, we notice that if there exist $ {F^1},{F^2} \in {\cal {D}}^T$  such that $U({F^1}) = {F^1},U({F^2}) = {F^2}$, then it follows from Lemma \ref{uniqueness} that for $ 0 \le t \le T$,
		\[{\left[ {d_1^t\left( {{F^1},{F^2}} \right)} \right]^2} = {\left[ {d_1^t\left( {U\left( {{F^1}} \right),U\left( {{F^2}} \right)} \right)} \right]^2} \le {C_T}\int_0^t {{{\left[ {d_1^s\left( {{F^1},{F^2}} \right)} \right]}^2}} ds.\]
		Using Gronwall inequality, we have
		\begin{center}
			${d_1^t}\left( {{F^1},{F^2}} \right) =0$, for $0 \le t \le T.$
		\end{center}
		Therefore, we conclude that the mapping $U:{\cal {D}}^{T}\to{\cal {D}}^{T}$ has at most one fixed point.\par
		Second, it follows from Proposition 3.3 of Lin \cite{lin1} that $\mathop {\sup }\limits_{t \in [0,T]} \hat{\mathbb{E}}\left[ {\left| {X_t^{(1)}} \right|} \right] < \infty $, then due to the fact that $F \in {\cal {D}}^{T}$, we have
    	\[\begin{aligned}
		d_1^T\left( {U\left( F \right),F} \right) &\le d_1^T\left( {U\left( F \right),{\mathbb{F}_{\bf{0}}}} \right) + d_1^T\left( {F,{\mathbb{F}_{\bf{0}}}} \right)\\&
		= \mathop {\sup }\limits_{t \in [0,T]} \mathop {\sup }\limits_{{C_\varphi } \le 1} \left| {\hat{\mathbb{E}}\left[ {\varphi \left( {X_t^{(1)}} \right)} \right] - \varphi \left( {\bf{0}} \right)} \right| + d_1^T\left( {F,{\mathbb{F}_{\bf{0}}}} \right)\\&
		\le \mathop {\sup }\limits_{t \in [0,T]} \hat{\mathbb{E}}\left[ {\left| {X_t^{(1)}} \right|} \right] + d_1^T\left( {F,{\mathbb{F}_{\bf{0}}}} \right)\\&
		< \infty .
	    \end{aligned}\]
        From Lemma \ref{existence}, one gets that for $m > n \ge 0$
        \[\begin{aligned}
        	d_1^T\left( {{U^{(m)}}\left( F \right),{U^{(n)}}\left( F \right)} \right) &\le \sum\limits_{k = n}^{m - 1} {d_1^T\left( {{U^{(k + 1)}}\left( F \right),{U^{(k)}}\left( F \right)} \right)} \\&
        	\le \sum\limits_{k = n}^\infty  {{{\left( {\frac{{{{\left( {{C_T}T} \right)}^k}}}{{k!}}} \right)}^{\frac{1}{2}}}d_1^T\left( {U\left( F \right),F} \right).}
        \end{aligned}\]
		Let $n \to \infty $, we have $d_1^T\left( {{U^{(m)}}\left( F \right),{U^{(n)}}\left( F \right)} \right) \to 0$.
		Thus ${\left\{ {{U^{(k)}}\left( F \right)} \right\}_{k=1}^{\infty}}$ is a Cauchy sequence in ${\cal {D}}^T$. Since ${ \cal {D}}^T$ is a complete metric space with respect to the metric $d_{r}^T$ (see our Proposition \ref{complete}), then ${\left\{ {{U^{(k)}}\left( F \right)} \right\}_{k=1}^{\infty}}$ converges. That is, there exists $ {F^ * } \in {\cal {D}}^T$ such that
		\[d_1^T\left( {{U^{(k)}}\left( F \right),{F^ * }} \right) \to 0, \text{ as }k \to \infty .\]  From Lemma \ref{uniqueness} and ${U^{(k)}}\left( F \right) = U\left( {{U^{(k - 1)}}\left( F \right)} \right)$, we have
		\[\begin{aligned}&
			d_1^T\left( {U\left( {{F^*}} \right),{F^*}} \right)\\&
			\le d_1^T\left( {{U^{(k)}}\left( F \right),{F^*}} \right) + d_1^T\left( {{U^{(k)}}\left( F \right),U\left( {{F^*}} \right)} \right)\\&
			\le d_1^T\left( {{U^{(k)}}\left( F \right),{F^*}} \right) + {C_T}Td_1^T\left( {{U^{(k - 1)}}\left( F \right),{F^*}} \right).
		\end{aligned}\]
		Let $k \to \infty $, we have $d_1^T\left( {U\left( {{F^*}} \right),{F^*}} \right) = 0$. Hence $F^*$ is the fixed point of $U$.

		If $\left(X_{t}\right)_{t \in[0, T]}$ is the solution of the distribution dependent $G$-SDE (\ref{DDGSDEs}), then the distribution of $\left(X_{t}\right)_{t \in[0, T]}$ is the fixed point of $U:{\cal {D}}^T\to{\cal {D}}^T$ and vice versa. Namely, the existence and uniqueness of the fixed point of $U:{\cal {D}}^T\to{\cal {D}}^T$ is equivalent to the existence and uniqueness of the solution $\left(X_{t}\right)_{t \in[0, T]} \in \bar M_G^2\left(0, T ; {\mathbb{R}^n}\right)$ of the distribution dependent $G$-SDE (\ref{DDGSDEs}). The proof of Theorem \ref{solution} is completed. $\hfill\Box$

	\begin{thm}\label{estimate1}
	For any $p \ge  2$, assume that (H1) and (H2) hold and $\phi(\cdot,{\bf{0}},\mathbb{F}_{\bf{0}})\in L^p(0,T;\mathbb{R}^n)$, that is there exists a positive constant $M$ such that
$ \int_{0}^T\left|\phi(s,{\bf{0}},\mathbb{F}_{\bf{0}})\right|^p ds\le M$, where $\phi  = b,{h_{ij}},{\sigma _j}$, respectively.
Then, we have the following estimate for the solution ${\left( {X_t^x} \right)_{t \in [0,T]}}$ of the distribution dependent $G$-SDE (\ref{DDGSDEs}) with initial condition $x_0=x$
    \[\hat{\mathbb{E}}\left[ {\mathop {\sup }\limits_{s \in [0,t]} {{\left| {X_s^x} \right|}^p}} \right] \le {C_1}{e^{{C_2}t}}, \ \ t \in [0,T],\]
    where
    \[\begin{aligned}&
    	{C_1} = {4^{p - 1}}{\left| x \right|^p} + {8^{p - 1}}M\left( {{T^{p - 1}} + {C_\sigma }{T^{p - 1}} + {C_p}\sigma _{{\mathbf{e_j}}{\mathbf{e_j}}^{T} }^{p}{T^{\frac{p}{2} - 1}}} \right),\\&
        {C_2} = {2^{4p - 3}}K^p\left( {{T^{p - 1}} + {C_\sigma }{T^{p - 1}} + {C_p}\sigma _{{\mathbf{e_j}}{\mathbf{e_j}}^{T} }^{p}{T^{\frac{p}{2} - 1}}} \right),\\&
        {{C_\sigma }}= {\left( {\frac{{\sigma _{({{\bf{e}}_i} + {{\bf{e}}_j}){{({{\bf{e}}_i} + {{\bf{e}}_j})}^T}}^2 + \sigma _{({{\bf{e}}_i} - {{\bf{e}}_j}){{({{\bf{e}}_i} - {{\bf{e}}_j})}^T}}^2}}{4}} \right)^p}
    \end{aligned}\]
    and $C_p$ is the constant as in Lemma \ref{inequality2}.
	\end{thm}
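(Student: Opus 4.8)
The plan is to start from the integral form (\ref{DDGSDEs}) with $x_0=x$ and estimate the four terms on its right-hand side separately. Applying the elementary inequality $|a_1+a_2+a_3+a_4|^p\le 4^{p-1}(|a_1|^p+|a_2|^p+|a_3|^p+|a_4|^p)$, then taking $\sup_{s\in[0,t]}$ followed by $\hat{\mathbb{E}}$, one is reduced to bounding the $\hat{\mathbb{E}}$-moment of the supremum of the $ds$-integral, of the $d\langle B\rangle^{ij}$-integral and of the $dB^j$-integral (all well defined by Lemma \ref{app1}). For the drift term I would use H\"older's inequality in time (which produces the factor $t^{p-1}$) together with the sub-additivity of $\hat{\mathbb{E}}$, so that the moment is dominated by $t^{p-1}\int_0^t\hat{\mathbb{E}}[|b(u,X_u^x,\mathbb{F}_u^{X})|^p]\,du$; for the quadratic-variation term I would invoke Lemma \ref{inequality3} with $\mathbf{a}=\mathbf{e}_i$, $\bar{\mathbf{a}}=\mathbf{e}_j$, which produces exactly the constant $C_\sigma$ times $t^{p-1}\int_0^t\hat{\mathbb{E}}[|h_{ij}(u,X_u^x,\mathbb{F}_u^{X})|^p]\,du$; for the stochastic integral I would invoke the BDG inequality Lemma \ref{inequality2}, which produces $C_p\,\sigma_{\mathbf{e}_j\mathbf{e}_j^{T}}^{p}\,t^{p/2-1}\int_0^t\hat{\mathbb{E}}[|\sigma_j(u,X_u^x,\mathbb{F}_u^{X})|^p]\,du$ --- this is the only place where the hypothesis $p\ge 2$ is used.

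Next I would linearize the coefficients via assumption (H1): $|\phi(u,X_u^x,\mathbb{F}_u^{X})|\le|\phi(u,\mathbf{0},\mathbb{F}_{\mathbf{0}})|+K(|X_u^x|+d_1(\mathbb{F}_u^{X},\mathbb{F}_{\mathbf{0}}))$ for $\phi=b,h_{ij},\sigma_j$, and use the identity $d_1(\mathbb{F}_u^{X},\mathbb{F}_{\mathbf{0}})=\hat{\mathbb{E}}[|X_u^x|]$ recorded in the Remark after Proposition \ref{space}, together with the H\"older inequality $\hat{\mathbb{E}}[|X_u^x|]\le(\hat{\mathbb{E}}[|X_u^x|^p])^{1/p}$. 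Combining these with $|a+b|^p\le 2^{p-1}(|a|^p+|b|^p)$ applied twice and with the monotonicity and sub-additivity of $\hat{\mathbb{E}}$ yields
\[
\int_0^t\hat{\mathbb{E}}\big[|\phi(u,X_u^x,\mathbb{F}_u^{X})|^p\big]\,du\le 2^{p-1}M+2^{2p-1}K^p\int_0^t\hat{\mathbb{E}}\big[\sup_{v\le u}|X_v^x|^p\big]\,du .
\]
Adding the contribution $4^{p-1}|x|^p$ of the initial datum to the three weighted copies of this bound, and keeping track of the numerical factors ($4^{p-1}2^{p-1}=8^{p-1}$ and $4^{p-1}2^{2p-1}=2^{4p-3}$) as well as of $t\le T$, one arrives at
\[
\hat{\mathbb{E}}\big[\sup_{s\le t}|X_s^x|^p\big]\le C_1+C_2\int_0^t\hat{\mathbb{E}}\big[\sup_{v\le u}|X_v^x|^p\big]\,du,\qquad t\in[0,T],
\]
with $C_1$ and $C_2$ exactly the constants in the statement. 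Gronwall's inequality then gives $\hat{\mathbb{E}}[\sup_{s\le t}|X_s^x|^p]\le C_1 e^{C_2 t}$.

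The main obstacle is not any individual estimate above --- each is a direct application of Lemmas \ref{inequality1}--\ref{inequality3} --- but rather the justification that $t\mapsto\hat{\mathbb{E}}[\sup_{s\le t}|X_s^x|^p]$ is finite and locally bounded, which is needed before Gronwall's lemma can legitimately be applied, since a priori the solution is only known to lie in $\bar M_G^2(0,T;\mathbb{R}^n)$. I would handle this by first running the whole computation for a suitably truncated process (or for the $G$-SDE Picard approximations used in constructing the solution), obtaining the same inequality with the same constants uniformly in the truncation parameter, and then passing to the limit by the Lipschitz structure (H1) and monotone convergence; this leaves $C_1$ and $C_2$ unchanged.
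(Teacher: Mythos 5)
Your proposal is correct and follows essentially the same route as the paper: the $4^{p-1}$ decomposition, H\"older in time for the $ds$-term, Lemma \ref{inequality3} for the $d\langle B\rangle^{ij}$-term, Lemma \ref{inequality2} for the $dB^j$-term, linearization of the coefficients via (H1) with $d_1(\mathbb{F}_s^{X^x},\mathbb{F}_{\bf 0})$ controlled by moments of $X_s^x$, the same bookkeeping giving $C_1$ and $C_2$, and Gronwall. The only difference is your explicit attention to the a priori finiteness of $t\mapsto\hat{\mathbb{E}}[\sup_{s\le t}|X_s^x|^p]$ before invoking Gronwall, a point the paper's proof passes over silently; your truncation/Picard remedy is a reasonable way to fill it and leaves the constants unchanged.
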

	\begin{proof}
For $p \ge 2$, we have
		\[\begin{aligned}
			{\left| {X_s^x} \right|^p} &\le {4^{p - 1}}\left( {{{\left| x \right|}^p} + {{\left| {\int_0^s b \left( {u,X_u^x,{\mathbb{F}}_u^{{X^x}}} \right)du} \right|}^p} + {{\left| {\int_0^s {{h_{ij}}} \left( {u,X_u^x,{\mathbb{F}}_u^{{X^x}}} \right)d\langle B\rangle _u^{ij}} \right|}^p}} \right.\\&
			\quad\quad\quad\quad\left. { + {{\left| {\int_0^s {{\sigma _j}} \left( {u,X_u^x,{\mathbb{F}}_u^{{X^x}}} \right)dB_u^j} \right|}^p}} \right).
		\end{aligned}\]
        It follows from H\"{o}lder inequality, the subadditivity of $G$-expectation  $\hat{\mathbb{E}}$ and Lemma \ref{inequality2}, Lemma \ref{inequality3} that
		\[\begin{aligned}&
			\hat{\mathbb{E}}\left[ {\mathop {\sup }\limits_{s \in [0,t]} {{\left| {X_s^x} \right|}^p}} \right] \\&\le {4^{p - 1}}\left( {{{\left| x \right|}^p} + \hat{\mathbb{E}}\left[ {\mathop {\sup }\limits_{s \in [0,t]} {{\left| {\int_0^s b \left( {u,{X_u^x},{\mathbb{F}}_u^{{X^x}}} \right)du} \right|}^p}} \right]} \right.\\&\quad
			\left. { + \hat{\mathbb{E}}\left[ {\mathop {\sup }\limits_{s \in [0,t]} {{\left| {\int_0^s {{h_{ij}}} \left( {u,{X_u^x},{\mathbb{F}}_u^{{X^x}}} \right)d\langle B\rangle _u^{ij}} \right|}^p}} \right] + \hat{\mathbb{E}}\left[ {\mathop {\sup }\limits_{s \in [0,t]} {{\left| {\int_0^t {{\sigma _j}} \left( {u,{X_u^x},{\mathbb{F}}_u^{{X^x}}} \right)dB_u^j} \right|}^p}} \right]} \right)\\&
			\le {4^{p - 1}}\left( |x|^p+{{t^{p - 1}}\int_0^t {\hat{\mathbb{E}}\left[ {{{\left| {b\left( {s,X_s^x,{\mathbb{F}}_s^{{X^x}}} \right)} \right|}^p}} \right]} ds + {C_\sigma }{t^{p - 1}}\int_0^t {\hat{\mathbb{E}}\left[ {{{\left| {{h_{ij}}\left( {s,X_s^x,{\mathbb{F}}_s^{{X^x}}} \right)} \right|}^p}} \right]} ds} \right.\\&
			\quad\left. { + {C_p}\sigma _{{\mathbf{e_j}}{\mathbf{e_j}}^{T} }^{p}{t^{\frac{p}{2} - 1}}\int_0^t {\hat{\mathbb{E}}\left[ {{{\left| {{\sigma _j}\left( {s,X_s^x,{\mathbb{F}}_s^{{X^x}}} \right)} \right|}^p}} \right]} ds} \right),
		\end{aligned}\]
where $ {{C_\sigma }}= {\left( {\frac{{\sigma _{({{\bf{e}}_i} + {{\bf{e}}_j}){{({{\bf{e}}_i} + {{\bf{e}}_j})}^T}}^2 + \sigma _{({{\bf{e}}_i} - {{\bf{e}}_j}){{({{\bf{e}}_i} - {{\bf{e}}_j})}^T}}^2}}{4}} \right)^p}$ and $C_p$ is the constant in Lemma \ref{inequality2}.
		From Assumption (H1), we get
		\[\begin{aligned}
			{\left| {\phi \left( {s,X_s^x,{\mathbb{F}}_s^{{X^x}}} \right)} \right|^p} &\le {2^{p - 1}}\left( {{{\left| {\phi \left( {s,X_s^x,{\mathbb{F}}_s^{{X^x}}} \right) - \phi \left( {s,{\bf{0}},\mathbb{F}_{\bf{0}}} \right)} \right|}^p} + {{\left| {\phi \left( {s,{\bf{0}},\mathbb{F}_{\bf{0}}} \right)} \right|}^p}} \right)\\&
			\le {2^{p - 1}}\left( { {K^p}{{\left( {\left| {X_s^x} \right| + {d_1}\left( {{\mathbb{F}}_s^{{X^x}},\mathbb{F}_{\bf{0}}} \right)} \right)}^p}}  + {{\left| {\phi \left( {s,{\bf{0}},\mathbb{F}_{\bf{0}}} \right)} \right|}^p}\right)\\&
			\le {2^{p - 1}}\left( { {K^p}{2^{p - 1}}{{\left| {X_s^x} \right|}^p} + {K^p}{2^{p - 1}}{{{{d_1}\left( {{\mathbb{F}}_s^{{X^x}},\mathbb{F}_{\bf{0}}} \right)} }^p}} +{{\left| {\phi \left( {s,{\bf{0}},\mathbb{F}_{\bf{0}}} \right)} \right|}^p}\right)\\&
	= {2^{p - 1}}\left( { {K^p}{2^{p - 1}}{{\left| {X_s^x} \right|}^p} + {K^p}{2^{p - 1}}{{ \mathop {\sup }\limits_{
			\scriptstyle{C_\varphi } \le {1}\hfill}\left|\hat{\mathbb{E}}[\varphi(X_s^{x})]-\varphi({\bf{0}})\right|^p }}} +{{\left| {\phi \left( {s,{\bf{0}},\mathbb{F}_{\bf{0}}} \right)} \right|}^p}\right)\\&
\le {2^{p - 1}}\left( { {K^p}{2^{p - 1}}{{\left| {X_s^x} \right|}^p} +  {K^p}{2^{p - 1}}\hat{\mathbb{E}}\left[ {{{\left| {X_s^x} \right|}^p}} \right]}+{{\left| {\phi \left( {s,{\bf{0}},\mathbb{F}_{\bf{0}}} \right)} \right|}^p} \right),
		\end{aligned}\]
then summarily we have
	\[\int_0^t {\hat{\mathbb{E}}\left[ {{{\left| {\phi \left( {s,X_s^x,F_s^{{X^x}}} \right)} \right|}^p}} \right]ds}  \le  {2^{p - 1}}M  +  {2^{2p - 1}}{K^p} \int_0^t {\hat{\mathbb{E}}\left[ {{{\left| {X_s^x} \right|}^p}}\right]ds},\]
	    for $\phi  = b,{h_{ij}},{\sigma _j}$, respectively. Thus,
	\[\begin{aligned}&
		\hat{\mathbb{E}}\left[ {\mathop {\sup }\limits_{s \in [0,t]} {{\left| {X_s^x} \right|}^p}} \right]\\ &\le {4^{p - 1}}\left[ {{\left| x \right|}^p} + \left( {{t^{p - 1}} + {C_\sigma }{t^{p - 1}} + {C_p}\sigma _{{\mathbf{e_j}}{\mathbf{e_j}}^{T} }^{p}{t^{\frac{p}{2} - 1}}} \right)\left( {{2^{p - 1}}M + {{2^{2p - 1}}{K^p} \int_0^t {\hat{\mathbb{E}}\left[ {{{\left| {X_s^x} \right|}^p}} \right]ds} }}\right) \right]\\&
		\le {4^{p - 1}}{\left| x \right|^p} + {8^{p - 1}}M\left( {{T^{p - 1}} + {C_\sigma }{T^{p - 1}} + {C_p}\sigma _{{\mathbf{e_j}}{\mathbf{e_j}}^{T} }^{p}{T^{\frac{p}{2} - 1}}} \right)\\&
     	\quad	+ {2^{4p - 3}}K^p\left( {{T^{p - 1}} + {C_\sigma }{T^{p - 1}} + {C_p}\sigma _{{\mathbf{e_j}}{\mathbf{e_j}}^{T} }^{p}{T^{\frac{p}{2} - 1}}} \right)\int_0^t {\hat{\mathbb{E}}\left[ {{{\left| {X_s^x} \right|}^p}} \right]ds}\\&
        \le {C_1} + {C_2}\int_0^t {\mathop {\sup }\limits_{s' \in [0,s]} \hat{\mathbb{E}}\left[ {{{\left| {X_{s'}^x} \right|}^p}} \right]ds}.
	\end{aligned}\]
		Using Gronwall inequality, we have
		$\hat{\mathbb{E}}\left[ {\mathop {\sup }\limits_{s \in [0,t]} {{\left| {X_s^x} \right|}^p}} \right] \le {C_1}{e^{{C_2}t}}.$
	\end{proof}
	\begin{thm}\label{estimate2}
		For any $p \ge 2$, assume that (H1) and (H2) hold, then there exists a positive constant $C_3$ such that for all $x,y \in {\mathbb{R}^n},~t \in \left[ {0,T} \right]$,
		\[\hat{\mathbb{E}}\left[ {\mathop {\sup }\limits_{s \in [0,t]} {{\left| {X_s^x - X_s^y} \right|}^p}} \right] \le C_3{\left| {x - y} \right|^p},\]
		where $C_3={4^{p - 1}}\exp \left\{ {{2^{3p - 2}}{K^p}\left( {{T^{p-1}} + {C_\sigma }{T^{p-1}} + {C_p}{T^{\frac{p}{2}-1}}} \right)t} \right\}.$
	\end{thm}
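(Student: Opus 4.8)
The plan is to run, for the difference process $X^x_\cdot-X^y_\cdot$, exactly the stability argument used in the proof of Theorem \ref{estimate1}. The only structural simplification is that the two processes solve the distribution dependent $G$-SDE (\ref{DDGSDEs}) with the \emph{same} coefficients $b,h_{ij},\sigma_j$ and differ only in the constant initial data, so the inhomogeneous contributions $\phi(\cdot,\mathbf{0},\mathbb{F}_{\mathbf{0}})$ cancel in the difference; this is why no integrability hypothesis on $\phi(\cdot,\mathbf{0},\mathbb{F}_{\mathbf{0}})$ and no constant $M$ enter $C_3$.

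Concretely, subtracting the integral forms of the two equations gives, for $s\in[0,t]$,
\[
X_s^x-X_s^y=(x-y)+\int_0^s\Delta b_u\,du+\int_0^s\Delta h_{ij,u}\,d\langle B\rangle_u^{ij}+\int_0^s\Delta\sigma_{j,u}\,dB_u^j,
\]
where $\Delta b_u=b(u,X_u^x,\mathbb{F}_u^{X^x})-b(u,X_u^y,\mathbb{F}_u^{X^y})$ and $\Delta h_{ij,u},\Delta\sigma_{j,u}$ are defined analogously. Raising to the $p$-th power with the elementary bound $|a_1+a_2+a_3+a_4|^p\le 4^{p-1}\sum_{i=1}^4|a_i|^p$, taking $\hat{\mathbb{E}}\big[\sup_{s\in[0,t]}\,\cdot\,\big]$, and then estimating the $du$-integral by Hölder's inequality and the $dB^j$- and $d\langle B\rangle^{ij}$-integrals by the BDG-type inequalities of Lemmas \ref{inequality2} and \ref{inequality3} (legitimate once the integrands are known to lie in the relevant $M_G^p$ spaces, see the caveat below), one gets
\[
\hat{\mathbb{E}}\Big[\sup_{s\in[0,t]}|X_s^x-X_s^y|^p\Big]\le 4^{p-1}|x-y|^p+4^{p-1}\Big(t^{p-1}+C_\sigma t^{p-1}+C_p\sigma_{\mathbf{e_j}\mathbf{e_j}^T}^p\,t^{\frac p2-1}\Big)\sum_{\phi}\int_0^t\hat{\mathbb{E}}\big[|\Delta\phi_s|^p\big]\,ds,
\]
the sum running over $\phi=b,h_{ij},\sigma_j$.

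Next I would invoke (H1), $|\Delta\phi_u|\le K\big(|X_u^x-X_u^y|+d_1(\mathbb{F}_u^{X^x},\mathbb{F}_u^{X^y})\big)$, and bound the distribution distance pointwise in $u$, using the sublinearity of $\hat{\mathbb{E}}$ and then Hölder's inequality for $\hat{\mathbb{E}}$ (Proposition 1.4.2 in Peng \cite{peng2}), by
\[
d_1(\mathbb{F}_u^{X^x},\mathbb{F}_u^{X^y})=\sup_{C_\varphi\le 1}\big|\hat{\mathbb{E}}[\varphi(X_u^x)]-\hat{\mathbb{E}}[\varphi(X_u^y)]\big|\le\hat{\mathbb{E}}\big[|X_u^x-X_u^y|\big]\le\big(\hat{\mathbb{E}}[|X_u^x-X_u^y|^p]\big)^{1/p}.
\]
With $(a+b)^p\le 2^{p-1}(a^p+b^p)$ this yields $\hat{\mathbb{E}}[|\Delta\phi_u|^p]\le 2^pK^p\,\hat{\mathbb{E}}\big[\sup_{s'\in[0,u]}|X_{s'}^x-X_{s'}^y|^p\big]$. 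Substituting back, bounding the prefactors $t^{p-1},t^{\frac p2-1}$ by the corresponding powers of $T$ (valid since $p\ge 2$), and collecting constants ($4^{p-1}\cdot 2^p=2^{3p-2}$) gives an estimate of the form
\[
\hat{\mathbb{E}}\Big[\sup_{s\in[0,t]}|X_s^x-X_s^y|^p\Big]\le 4^{p-1}|x-y|^p+2^{3p-2}K^p\Big(T^{p-1}+C_\sigma T^{p-1}+C_p\sigma_{\mathbf{e_j}\mathbf{e_j}^T}^p T^{\frac p2-1}\Big)\int_0^t\hat{\mathbb{E}}\Big[\sup_{s'\in[0,s]}|X_{s'}^x-X_{s'}^y|^p\Big]ds,
\]
and Gronwall's inequality then delivers $\hat{\mathbb{E}}\big[\sup_{s\in[0,t]}|X_s^x-X_s^y|^p\big]\le C_3|x-y|^p$ with $C_3$ as in the statement.

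I do not expect a genuine obstacle: this is a routine stability-in-initial-condition estimate of the same flavour as Theorem \ref{estimate1}, and the real care is in bookkeeping the multiplicative constants — the $4^{p-1}$ from the four-term split, the $2^{p-1}$ from $(|X^x-X^y|+d_1)^p$, and the exponents $p-1$ versus $\tfrac p2-1$ coming from Hölder versus the BDG estimate — so that they match $C_3$. The one preliminary point deserving a word is that the left-hand side must be known to be finite before Gronwall is applied; this can be secured by a standard stopping-time truncation $\tau_n=\inf\{t:|X_t^x|\vee|X_t^y|>n\}\wedge T$, deriving the estimate on $[0,\tau_n]$ where the integrands are bounded and hence in $M_G^p$, and passing to the limit $n\to\infty$ by Fatou's property of $\hat{\mathbb{E}}$ (alternatively, when the hypotheses of Theorem \ref{estimate1} also hold, the finiteness follows directly from its conclusion).
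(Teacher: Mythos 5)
Your proposal follows essentially the same route as the paper's proof: the $4^{p-1}$ four-term splitting, Hölder plus the BDG-type Lemmas \ref{inequality2}–\ref{inequality3}, the bound $d_1(\mathbb{F}_s^{X^x},\mathbb{F}_s^{X^y})\le\hat{\mathbb{E}}[|X_s^x-X_s^y|]$ raised to the $p$-th power via Hölder for $\hat{\mathbb{E}}$, the same constant bookkeeping $4^{p-1}\cdot 2^{p}=2^{3p-2}$, and Gronwall. The only difference is your (reasonable, but not needed in the paper) remark on securing finiteness of the left-hand side before Gronwall; otherwise the argument matches.
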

   \begin{proof}
   	For $p \ge 2$, we have
   \[\begin{aligned}
   	{\left| {X_s^x - X_s^y} \right|^p} &\le {4^{p - 1}}\left({{{\left| {x - y} \right|}^p} + {{\left| {\int_0^s {\left( {b\left( {u,X_u^x,{\mathbb{F}}_u^{{X^x}}} \right) - b\left( {u,X_u^y,{\mathbb{F}}_u^{{X^y}}} \right)} \right)} du} \right|}^p}} \right.\\& \quad
   	+ {\left| {\int_0^s {\left( {{h_{ij}}\left( {u,X_u^x,{\mathbb{F}}_u^{{X^x}}} \right) - {h_{ij}}\left( {u,X_u^y,{\mathbb{F}}_u^{{X^y}}} \right)} \right)} d\langle B\rangle _u^{ij}} \right|^p}\\& \quad
   	\left. { + {{\left| {\int_0^s {\left( {{\sigma _j}\left( {u,X_u^x,{\mathbb{F}}_u^{{X^x}}} \right) - {\sigma _j}\left( {u,X_u^y,{\mathbb{F}}_u^{{X^y}}} \right)} \right)} dB_u^j} \right|}^p}} \right).
   \end{aligned}\]
    It follows from H\"{o}lder inequality, the subadditivity of $G$-expectation  $\hat{\mathbb{E}}$ and Lemma \ref{inequality2}, Lemma \ref{inequality3} that
    \[\begin{aligned}
    	\hat{\mathbb{E}}\left[ {\mathop {\sup }\limits_{0 \le s \le t} {{\left| {X_s^x - X_s^y} \right|}^p}} \right] &\le {4^{p - 1}}\left( {{{\left| {x - y} \right|}^p} + {t^{p - 1}}\int_0^t {\hat{\mathbb{E}}\left[ {{{\left| {b\left( {s,X_s^x,{\mathbb{F}}_s^{{X^x}}} \right) - b\left( {s,X_s^y,{\mathbb{F}}_s^{{X^y}}} \right)} \right|}^p}} \right]} ds} \right.\\&\quad
    	+ {C_\sigma }{t^{p - 1}}\int_0^t {	\hat{\mathbb{E}}\left[ {{{\left| {{h_{ij}}\left( {s,X_s^x,{\mathbb{F}}_s^{{X^x}}} \right) - {h_{ij}}\left( {s,X_s^y,{\mathbb{F}}_s^{{X^y}}} \right)} \right|}^p}} \right]} ds\\&\quad
    	\left. { + {C_p}\sigma _{{\mathbf{e_j}}{\mathbf{e_j}}^{T} }^{p}{t^{\frac{p}{2} - 1}}\int_0^t {	\hat{\mathbb{E}}\left[ {{{\left| {{\sigma _j}\left( {s,X_s^x,{\mathbb{F}}_s^{{X^x}}} \right) - {\sigma _j}\left( {s,X_s^y,{\mathbb{F}}_s^{{X^y}}} \right)} \right|}^p}} \right]} ds} \right).
    \end{aligned}\]
By Assumption (H2), we get
   \[\begin{aligned}
   	{\left| {\phi \left( {s,X_s^x,{\mathbb{F}}_s^{{X^x}}} \right) - \phi \left( {s,X_s^y,{\mathbb{F}}_s^{{X^y}}} \right)} \right|^p} &\le {2^{p - 1}}{K^p}\left( {{{\left| {X_s^x - X_s^y} \right|}^p} + {{\left| {{d_1}\left( {{\mathbb{F}}_s^{{X^x}},{\mathbb{F}}_s^{{X^y}}} \right)} \right|}^p}} \right)\\&
   	\le {2^{p - 1}}{K^p}\left( {{{\left| {X_s^x - X_s^y} \right|}^p} + \hat{\mathbb{E}}\left[ {{{\left| {X_s^x - X_s^y} \right|}^p}} \right]} \right),
   \end{aligned}\]
    thus summarily
    \[\int_0^t {\hat{\mathbb{E}}\left[ {{{\left| {\phi \left( {s,X_s^x,{\mathbb{F}}_s^{{X^x}}} \right) - \phi \left( {s,X_s^y,{\mathbb{F}}_s^{{X^y}}} \right)} \right|}^p}} \right]ds}  \le {2^{p }}{K^p}\int_0^t {\hat{\mathbb{E}}\left[ {{{\left| {X_s^x - X_s^y} \right|}^p}} \right]ds} ,\]
   for $\phi  = b,{h_{ij}},{\sigma _j}$, respectively. Hence, we obtain
    \[\begin{aligned}&
    	\hat{\mathbb{E}}\left[ {\mathop {\sup }\limits_{0 \le s \le t} {{\left| {X_s^x - X_s^y} \right|}^p}} \right] \\&\le {4^{p - 1}}{\left| {x - y} \right|^p} + {4^{p - 1}}{2^{p }}{K^p}\left( {{t^{p - 1}} + {C_\sigma }{t^{p - 1}} + {C_p}\sigma _{{\mathbf{e_j}}{\mathbf{e_j}}^{T} }^{p}{t^{\frac{p}{2} - 1}}} \right)\int_0^t {\hat{\mathbb{E}}\left[ {{{\left| {X_s^x - X_s^y} \right|}^p}} \right]} ds\\&
        \le {4^{p - 1}}{\left| {x - y} \right|^p} + {C}\int_0^t {\hat{\mathbb{E}}\left[ {\mathop {\sup }\limits_{0 \le s' \le s} {{\left| {X_{s'}^x - X_{s'}^y} \right|}^p}} \right]} ds,
    \end{aligned}\]
    where ${C} ={2^{3p - 2}}{K^p}\left( {{T^{p - 1}} + {C_\sigma }{T^{p - 1}} + {C_p}\sigma _{{\mathbf{e_j}}{\mathbf{e_j}}^{T} }^{p}{T^{\frac{p}{2} - 1}}} \right).$
    Furthermore, by Gronwall inequality, we derive that
    \[\begin{aligned}
        \hat{\mathbb{E}}\left[ {\mathop {\sup }\limits_{0 \le s \le t} {{\left| {X_s^x - X_s^y} \right|}^p}} \right] &\le {4^{p - 1}}{\left| {x - y} \right|^p}{e^{{C}t}}\\& = C_3{\left| {x - y} \right|^p},
    \end{aligned}\]
    where $C_3={4^{p - 1}}{e^{{C}t}}$.
     The proof of Theorem \ref{estimate2} is thus completed.
   \end{proof}

\begin{example}
Let $b'(t,x,y), h'_{i j}(t,x,y), \sigma'_{j}(t,x,y): [0, T] \times {\mathbb{R}^n} \times {\mathbb{R}^n}\rightarrow {\mathbb{R}^n}$ satisfy the following  assumptions (A1) and (A2):\\
		(A1) there exists $K>0$ such that for all $t \in[0, T],\  x_1,x_2,y_1, y_2 \in \mathbb{R}^n$, we have
		 \[\begin{aligned}&
  \left| {b'(t,x_1,y_1) - b'(t,x_2,y_2)} \right| + \left| {{h'_{ij}}(t,x_1,y_1) - {h'_{ij}}(t,x_2,y_2)} \right| + \left| {{\sigma'_j}(t,x_1,y_1) - {\sigma' _j}(t,x_2,y_2)} \right| \\&\leq K(|x_1-x_2|+|y_1-y_2|);
\end{aligned}\]
(A2) for any fixed $(t,x)$, $b'(t,x,\cdot), h'_{i j}(t,x,\cdot), \sigma'_{j}(t,x,\cdot)$ are bounded functions on $\mathbb{R}^n$.

Specify $b,h_{i j},\sigma_{j}:[0, T] \times {\mathbb{R}^n} \times {\cal {D}} \rightarrow {\mathbb{R}^n}$, respectively,
by the following
$$b(t,x,f):=f(b'(t,x,\cdot)),\  h_{i j}(t,x,f):=f(h'_{i j}(t,x,\cdot)),\  \sigma_{j}(t,x,f):=f(\sigma'_{j}(t,x,\cdot)).$$
It can be checked that $b,h_{i j},\sigma_{j}$ fulfil Assumptions (H1) and (H2). Hence, the equation (\ref{DDGSDEs}) becomes the following
  \[ {X_t} = x_0 + \int_0^t \hat{\mathbb{E}} [b'(s,x,{X_s})]{|_{x = {X_s}}}ds + \int_0^t \hat{\mathbb{E}} [{h'_{ij}}(s,x,{X_s})]{|_{x = {X_s}}}d\langle B\rangle _s^{ij} + \int_0^t \hat{\mathbb{E}} [{\sigma' _j}(s,x,{X_s})]{|_{x = {X_s}}}dB_s^j,\]
which coincides with the mean-field $G$-SDE (\ref{sunsde}) introduced by Sun in \cite{sun2020}.
\end{example}

\vspace*{1em}
\noindent\textbf{Acknowledgement}

The work is supported by the National Key Research and Development Program of China (No. 2018YFA0703900) and the Natural Science Foundation of Shandong Province (No. ZR2021MA098 and ZR2019ZD41).

\end{document}